\documentclass[12pt]{amsart}

\usepackage{amssymb,amsbsy,amsmath,amsfonts,amssymb,amscd}
\usepackage{latexsym}

\input xy
\xyoption{all}




\newcommand\Ga{\Gamma}
\newcommand\Lam{\Lambda}

\newcommand\ga{\gamma}

\DeclareMathOperator{\Fix}{Fix}
\DeclareMathOperator{\Ext}{Ext}
\DeclareMathOperator{\Hom}{Hom}
\DeclareMathOperator{\Alb}{Alb}

\newcommand{\CC}{\ensuremath{\mathbb{C}}}

\newcommand{\ZZ}{\ensuremath{\mathbb{Z}}}

\newcommand{\hol}{\ensuremath{\mathcal{O}}}

\newcommand{\PP}{\ensuremath{\mathbb{P}}}

\newcommand{\ra}{\ensuremath{\rightarrow}}

\def\eea{\end{eqnarray*}}
\def\bea{\begin{eqnarray*}}

\newcommand\dual{\mathrel{\raise3pt\hbox{$\underline{\mathrm{\thinspace d
\thinspace}}$}}}
\newcommand\qe{\ifhmode\unskip\nobreak\fi\quad $\Box$}       

\def\BOX{\hfill\lower.5\baselineskip\hbox{$\Box$}}

\newtheorem{theo}{Theorem}[section]
\newtheorem{remarkk}[theo]{Remark}

\newtheorem{defin}[theo]{Definition}

\newtheorem{prop}[theo] {Proposition}
\newtheorem{cor}[theo]{Corollary}
\newtheorem{lemma}[theo]{Lemma}
\newtheorem{example}[theo]{Example}

\newtheorem{conj}[theo]{Conjecture}

\newtheorem{rema}{Remark}[section]

\DeclareMathOperator{\Aut}{Aut}

\begin{document}

\title{The moduli space of Keum - Naie - surfaces}
\author{I. Bauer, F. Catanese}

\thanks{The present work took place in the realm of the DFG Forschergruppe 790
"Classification of algebraic surfaces and compact complex manifolds".}

\date{\today}

\maketitle
{\bf Alles Gute zum  60. Geburtstag, Fritz! \footnote{Our wishes did not come through, and we regret
very much the loss of an excellent mathematician, an exceptional person and a dear friend.}}
\section*{Introduction}

In the nineties Y. H. Keum and D. Naie (cf. \cite{naie94}, 
\cite{keum}) constructed a family of
minimal surfaces of general type with $K_S^2 =4$ and $p_g = 0$ as 
double covers of an Enriques surface with eight nodes.

They calculated the fundamental group of the constructed surfaces, 
but they did not address the problem of determining the moduli space 
of their surfaces.

\noindent
The motivation for the present paper comes from our joint work 
\cite{4names}  together with F. Grunewald and R. Pignatelli. In that 
article,  among other results, we constructed several series of new 
surfaces
of general type with $p_g=0$ as minimal resolutions of quotients of a 
product of two curves (of respective genera $g_1, g_2$ at least two)
by the action of a finite group $G$.  This construction produced many 
 interesting examples of new fundamental groups (of  surfaces
of general type with $p_g=0$) but in general yields proper 
subfamilies and not full  irreducible components of the respective moduli 
spaces of surfaces of general type (see also \cite{burniat1}, \cite{burniat2}). 

Obviously, when two such families 
yield surfaces with non isomorphic fundamental groups, then clearly 
the two families lie in distinct connected components of   the moduli 
space. But what happens if the fundamental groups are isomorphic (and 
the value of $K^2_S$
is the same)?

In particular, two of the families we constructed in \cite{4names} 
corresponded to surfaces having the
same fundamental group as the Keum-Naie surfaces. \footnote{ Observe 
however that the correct description of the fundamental group is 
only to be found  in \cite{naie94}.}

We reproduce below an excerpt of the classification table (of 
quotients as above by a non free action of $G$, but with canonical 
singularities) in \cite{4names}.

\begin{table}[ht]
\label{surfaces}
\begin{tabular}{|c|c|c|c|c|c|c|c|}
\hline
$K^2$& $T_1$  &  $T_2$&$g_1$&$g_2$&G& dim & $\pi_1(S)$\\
\hline\hline

4&$2^2$,
$4^2$&$2^2$, $4^2$&3& 3&$\ZZ / 4 \ZZ
\times \ZZ / 2 \ZZ$&2&$\ZZ^4 \hookrightarrow \pi_1
\twoheadrightarrow (\ZZ /2 \ZZ)^2$                   \\
\hline
4&$2^5$&
$2^5$&3&
3&$(\ZZ /2 \ZZ)^3$&4&$\ZZ^4 \hookrightarrow \pi_1
\twoheadrightarrow (\ZZ / 2 \ZZ)^2$\\
\hline
\end{tabular}
\end{table}

This excerpt shows the $2$ families, of respective dimensions $2$ and 
$4$, which we constructed  as $\ZZ /4 \ZZ \times \ZZ / 2 \ZZ$, resp. 
$(\ZZ /2 \ZZ)^3$, - coverings of $\PP^1 \times \PP^1$ and  branched on a 
divisor of type $(4,4)$,  resp. $(5,5)$ which are union of
horizontal and vertical lines ($T_1, T_2$ stand for the type of branching on each line).

Once we found  out that their fundamental groups were isomorphic to 
the fundamental groups of the surfaces constructed by Keum and Naie, 
the most natural question was whether all these surfaces
would belong to a unique irreducible component of the moduli space.

A straightforward computation showed that our family of dimension $4$ 
was equal to the family constructed by Keum, and that both families 
were subfamilies of the family constructed by Naie. To be more 
precise, each surface of our family of $(\ZZ / 2 \ZZ)^3$ - coverings 
of $\PP^1 \times \PP^1$ has $4$ nodes. These nodes can be smoothened 
simultaneously thus obtaining  a $5$ - dimensional family of $(\ZZ / 2 \ZZ)^3$ - 
Galois coverings of $\PP^1 \times \PP^1$. The full six dimensional 
component is obtained then as the family of natural deformations
(see \cite{cime2}) of the family of such  Galois coverings.

A somewhat lengthy but  essentially standard computation in local 
deformation theory  showed that the six dimensional family of natural 
deformations of smooth $(\ZZ /2 \ZZ)^3$ -  Galois coverings of $\PP^1 
\times \PP^1$ is an irreducible component of the moduli space. We 
will not give the details of this calculation, since we get a 
stronger result by a different method.

  The following theorem is the main result of this article:
\begin{theo}\label{main}
  Let $S$ be a smooth complex projective surface which is 
homotopically equivalent to a Keum - Naie surface. Then $S$ is a Keum 
- Naie surface. The connected component of the Gieseker moduli space 
$\mathfrak M^{can}_{1, 4}$ corresponding to
  Keum - Naie surfaces is irreducible, normal, unirational  of dimension  6.
\end{theo}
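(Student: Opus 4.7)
The strategy is to exploit the exact sequence $\ZZ^4 \hookrightarrow \pi_1(S) \twoheadrightarrow (\ZZ/2\ZZ)^2$ satisfied by the Keum--Naie fundamental group: the $(\ZZ/2\ZZ)^2$-quotient gives an étale Galois cover $\hat S \to S$ with abelian $\pi_1$, and on $\hat S$ the Albanese map reconstructs the original geometry as a double cover of an abelian surface. This route circumvents the local deformation computation alluded to in the introduction and simultaneously yields both parts of the theorem.

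I would first set up the étale cover. Let $S$ be homotopy equivalent to a Keum--Naie surface, so that $\pi_1(S)$ is the same group and in particular contains the same normal abelian subgroup $\ZZ^4$ of index $4$. Passing to the corresponding $(\ZZ/2\ZZ)^2$-Galois étale cover $\hat S \to S$, one has $\pi_1(\hat S) = \ZZ^4$ and, by multiplicativity of $\chi(\Oh)$ and $K^2$ in étale covers, $\chi(\Oh_{\hat S}) = 4$ and $K_{\hat S}^2 = 16$. From $\pi_1(\hat S) = \ZZ^4$ we read off $b_1(\hat S) = 4$, hence $q(\hat S) = 2$ and $p_g(\hat S) = 5$.

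Next I would study the Albanese map $\alpha\colon \hat S \to A := \Alb(\hat S)$, a morphism to an abelian surface. The Albanese image cannot be a curve: were it a curve $C$, Castelnuovo--de Franchis would force $g(C) \geq 2$, and its non-abelian $\pi_1(C)$ would appear as a quotient of $\ZZ^4$, a contradiction. Hence $\alpha$ is surjective. A numerical analysis combining $p_g(\hat S) = 5$, the $(\ZZ/2\ZZ)^2$-equivariance of $\alpha$, and the identification $\alpha^*\colon H^0(\Omega^1_A) \xrightarrow{\sim} H^0(\Omega^1_{\hat S})$ should pin down $\alpha$ as a generically $2$-to-$1$ map branched on a specific $(\ZZ/2\ZZ)^2$-invariant divisor $B \subset A$; in particular this forces $A$ to be isogenous to a product of two elliptic curves. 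Descending by the $(\ZZ/2\ZZ)^2$-action then exhibits $S$ as a double cover of $A/(\ZZ/2\ZZ)^2$, an Enriques surface with eight nodes, so $S$ is a Keum--Naie surface, proving the first assertion.

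For the moduli statement, the Keum--Naie construction is parametrized explicitly by the choice of a product $E_1 \times E_2$ of elliptic curves (contributing $2$ moduli) together with a $(\ZZ/2\ZZ)^2$-invariant branch divisor in a fixed linear system on $E_1 \times E_2$ (contributing $4$ further moduli), yielding a $6$-dimensional irreducible rational parameter variety. By the first assertion, its image in $\mathfrak M^{can}_{1,4}$ covers the entire connected component in question, which is therefore irreducible, unirational, of dimension $6$; normality should follow from a direct equivariant tangent--obstruction computation for the double cover data, matching the tangent to the parameter space with the tangent to moduli. The principal obstacle is the Albanese analysis: controlling the degree and branch divisor of $\alpha$ from purely topological and numerical input, which is precisely where the forced $(\ZZ/2\ZZ)^2$-symmetry on $\hat S$ must be exploited.
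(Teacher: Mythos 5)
There is a genuine gap at the central step, the one you yourself flag as ``the principal obstacle.'' You claim that a ``numerical analysis'' combining $p_g(\hat S)=5$, $\chi(\hol_{\hat S})=4$, $K_{\hat S}^2=16$ and the $(\ZZ/2\ZZ)^2$-equivariance of $\alpha$ ``should pin down $\alpha$ as generically $2$-to-$1$.'' It does not. Those numbers only say that $\hat S$ realizes the boundary case $K^2=4\chi$ of Severi's inequality, and the assertion that equality forces the Albanese map to be a degree-two cover of an abelian surface is known only when the canonical bundle is ample (Manetti) and is otherwise open --- the paper records precisely this as Conjecture \ref{conj1}, and from the data $(\pi_1(S),K_S^2)$ alone it can prove the statement only under the extra hypothesis that $S$ deforms to a surface with ample canonical bundle (Theorem \ref{Kample}), via Manetti's theorem. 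The paper's actual proof of the homotopy statement uses strictly more than $\pi_1$ and the Chern numbers: the degree of the Albanese map is computed as the index $[H^4(\hat S,\ZZ):\wedge^4 H^1(\hat S,\ZZ)]$, which is an invariant of the cohomology ring and hence of the homotopy type, and equals $2$ because it equals $2$ for the \'etale $(\ZZ/2\ZZ)^2$-cover of the model Keum--Naie surface. Your argument never invokes the ring structure carried by the homotopy equivalence, so as written the degree of $\alpha$ --- and with it the whole first assertion --- is not controlled.

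Two further points, less central but still missing. First, even after $\deg\alpha=2$ is known you must show that the Stein factorization $Y$ of $\alpha$ has only rational double points and that the branch divisor is of type $(4,4)$: the paper gets this from Horikawa's canonical resolution formulas, which with $K_{\hat S}^2=16$ and $\chi=4$ force $t=-2\sum(\xi_i-1)$, hence $\xi_i=1$ for all $i$ and $t=0$, and excludes type $(1,4)$ because such a divisor cannot be $(\ZZ/2\ZZ)^2$-invariant; your descent step silently assumes all of this. Second, on the moduli side your plan (the $2+4$-parameter rational family, surjectivity onto the connected component via the first assertion, and normality from an equivariant tangent computation) is essentially the paper's route, but the decisive input is the bound $\dim\Ext^1(\Omega^1_X,\hol_X)\le 6$ for the canonical model $X$, obtained from the $G$-equivariant double-cover exact sequences together with $\dim H^0(\hol_D(D))^{++}=4$, $H^0(\hol_D(L))^{++}\cong\CC^2$ and $\Ext^1(\pi^*\Omega^1_Y,\hol_{\hat X})^{++}=0$; you also need quasi-finiteness of the classifying morphism of the family (lifting isomorphisms of surfaces to $G$-equivariant isomorphisms of the covers) to get dimension exactly $6$. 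None of these steps is indicated in your sketch, though they are of a standard nature once the first part is in place.
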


Observe that for surfaces of general type we have two moduli spaces: 
one is the moduli space
$\mathfrak M^{min}_{\chi, K^2}$ for minimal models $S$ having 
$\chi(\hol_S) = \chi$, $K^2_S = K^2$, the other is the moduli space 
$\mathfrak M^{can}_{\chi, K^2}$ for canonical models $X$ having 
$\chi(\hol_X) = \chi$, $K^2_X = K^2$; the latter is called the Gieseker 
moduli space and is a quasi projective scheme by Gieseker's theorem 
(\cite{gieseker}). Moreover, there is a natural morphism 
$\mathfrak M^{min}_{\chi, K^2} \ra \mathfrak M^{can}_{\chi, K^2}$ which is a 
bijection. The local structure of $\mathfrak M^{can}_{\chi, K^2}$ as
complex analytic space is the quotient of the base of the  Kuranishi 
family by the action
of the finite group $ \Aut(S) = \Aut(X)$. 

Usually the 
structure as analytic space of $\mathfrak M^{min}_{\chi, K^2}$ tends to be more singular 
than the one of
$\mathfrak M^{can}_{\chi, K^2}$ (see e.g. \cite{enr}).

In order to achieve our main result, we resort first of all to a 
slightly different construction of Keum - Naie surfaces.

  We start with a $(\ZZ/2 \ZZ)^2$ - action on the product of two 
elliptic curves $E_1' \times E_2'$.

  This action has $16$ fixed points and the quotient is an $8$ - nodal 
Enriques surface. Instead of constructing $S$ as the double cover of 
the Enriques surface, we consider an \'etale $(\ZZ / 2 \ZZ)^2$ - 
covering $\hat{S}$ of $S$, whose existence is guaranteed  from the 
structure of the fundamental group of $S$. $\hat{S}$ is obtained as a 
double cover of $E_1' \times E_2'$ branched in a $(\ZZ/2 \ZZ)^2$ - 
invariant divisor of type $(4,4)$,
and $S$ is recovered as the quotient of $\hat{S}$ by the action of 
$(\ZZ / 2 \ZZ)^2$ on it.

The structure of this $(\ZZ / 2 \ZZ)^2$  action and the geometry of 
the  covering $\hat{S}$ of $S$ is essentially encoded in the 
fundamental group $\pi_1(S)$, which is described as an affine group 
$\Gamma \in \mathbb{A}(2,\CC)$. In particular, it follows that  the 
Albanese map of $\hat{S}$ is the above double cover $\hat{\alpha} : 
\hat{S} \rightarrow E_1' \times E_2'$.

If $S'$ is now homotopically equivalent to a Keum - Naie surface $S$, 
then we have a corresponding
\'etale $(\ZZ / 2 \ZZ)^2$ - covering $\hat{S'}$ which is 
homotopically equivalent to $\hat{S}$. Since we know that the degree 
of the Albanese map of $\hat{S}$ is equal to two (by construction), 
we can conclude the same for the Albanese map of $\hat{S'}$ and this 
allows to deduce that also $\hat{S'}$ is a double cover of a product 
of elliptic curves branched in a $(\ZZ/2 \ZZ)^2$ - invariant divisor 
of type $(4,4)$.

\smallskip
Our paper is organized as follows: in section one we study a certain 
$(\ZZ / 2 \ZZ)^2$ - action on a product of two elliptic curves $E_1' 
\times E_2'$ and explain our construction of Keum - Naie surfaces.

In section 2 we use elementary representation theory to calculate the 
dimension of the space of $(\ZZ / 2 \ZZ)^2$ - invariant divisors of 
type $(4,4)$ on $E_1' \times E_2'$, and show that the
Gieseker moduli space of Keum - Naie surfaces is a normal, 
irreducible, unirational variety of dimension six.

In section 3 we conclude the proof of our main result \ref{main}.

The brief section 4 is devoted to the bicanonical image of
Keum-Naie surfaces: we show that the map has degree 4 and
that the image is always the same 4-nodal Del Pezzo surface of degree 4.

We stick to the traditional (`old fashioned'?) notation $\equiv$ to 
denote linear equivalence.

\section { A $(\ZZ/ 2 \ZZ)^2$ - action on a product of elliptic 
curves and Keum-Naie surfaces}

Let $(E,o)$ be any elliptic curve, with an action of the group
$$G : = (\ZZ /2 \ZZ)^2 = \{0, g_1, g_2, g_3 : = g_1+g_2 \}$$
given by
$$
g_1(z) := z + \eta, \ \ g_2(z) = -z,
$$
where $\eta \in E$ is a $2$ - torsion point of $E$.

\begin{rema}\label{invdiv}
The effective divisor $[o] + [\eta] \in Div^2(E)$ is invariant under 
$G$, hence the invertible
sheaf  $\hol_E([o] + [\eta])$ carries a natural $G$-linearization.

In particular, $G$ acts on the vector space $ H^0(E, \hol_E([o] + 
[\eta]))$ which splits then as a direct
sum $$H^0(E, \hol_E([o] + [\eta])) = \bigoplus_{\chi \in G^*} H^0(E, 
\hol_E([o] + [\eta]))^{\chi}$$
of the eigenspaces corresponding to the characters  $\chi$  of $G$. 
We shall use the self explanatory notation
$H^0(E, \hol_E([o] + [\eta]))^{+-}$ for the eigenspace corresponding 
to the character
$\chi$ such that $\chi(g_1) = 1$, $\chi(g_2) = -1$.
\end{rema}

We have the following:

\begin{lemma}\label{h++}
In the above setting we have $H^0(E, \hol_E([o] + [\eta]))^{+-} = 
H^0(E, \hol_E([o] + [\eta]))^{-+} =0$
and we have a splitting as a sum of two 1-dimensional eigenspaces:
  $$H^0(E, \hol_E([o] + [\eta])) = H^0(E, \hol_E([o] + [\eta]))^{++} 
\oplus H^0(E, \hol_E([o] + [\eta]))^{--}.$$

\end{lemma}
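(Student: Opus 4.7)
My approach is to combine a Riemann--Roch count of sections with the holomorphic Lefschetz fixed point formula and the character theory of $G = (\ZZ/2\ZZ)^2$, avoiding any explicit computation with theta functions.

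First I would record the basic numerics: Riemann--Roch on $E$ gives $h^0(L) = \deg L = 2$ and $h^1(L) = 0$ for $L = \hol_E([o]+[\eta])$. Since the divisor $D = [o]+[\eta]$ is $G$-invariant \emph{as a divisor}, $L$ carries a canonical $G$-linearization for which the canonical section $s_0 \in H^0(L)$ with $\Div(s_0) = D$ is $G$-invariant; hence $s_0 \in H^0(L)^{++}$, so $\dim H^0(L)^{++} \geq 1$.

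Next I would compute the traces of $g_1^{\ast}$ and $g_2^{\ast}$ on $H^0(L)$ via the holomorphic Lefschetz formula, using $H^1(L) = 0$. Because $g_1$ acts on $E$ by translation by $\eta \neq o$ it has no fixed points, giving $\operatorname{tr}(g_1^{\ast} \mid H^0(L)) = 0$. The element $g_2 = -\operatorname{id}$ has as fixed locus the four $2$-torsion points of $E$, at each of which $(dg_2)_p = -1$, so the Lefschetz denominator equals $2$. With the canonical linearization, the action of $g_2$ on the fiber $L_p$ is $-1$ for $p \in \{o,\eta\}$ (in a local coordinate $w$ centered at $p$, the generator $1/w$ of $L$ pulls back to $-1/w$, using that $-\eta \equiv \eta$ on $E$) and $+1$ at the other two $2$-torsion points, where $L$ is locally trivial and the constant section is invariant. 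Summing the four local contributions yields $\operatorname{tr}(g_2^{\ast} \mid H^0(L)) = 0$.

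Finally, letting $a, b, c, d$ denote the dimensions of the eigenspaces $H^0(L)^{++}, H^0(L)^{+-}, H^0(L)^{-+}, H^0(L)^{--}$, the constraints $a+b+c+d = 2$, $a+b-c-d = 0$, $a-b+c-d = 0$, and $a \geq 1$ force $a = d = 1$ and $b = c = 0$, which is the claim. The step requiring the most care is the fiber computation at $o$ and $\eta$: one must check that the canonical linearization is the one making $s_0$ invariant, and carefully track the minus sign arising from $-\eta \equiv \eta$ in the local coordinate computation, since a sign error there would shift the outcome from the $\{++,--\}$ pair to the $\{+-,-+\}$ pair.
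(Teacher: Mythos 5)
Your proof is correct, but it takes a genuinely different route from the paper. The paper argues entirely inside the pencil $|[o]+[\eta]|$: the constant function gives $\dim H^0(L)^{++}\geq 1$; a short computation shows there are exactly two $G$-invariant divisors $[P]+[P+\eta]$ with $P$ a $2$-torsion point, so $H^0(L)$ splits into two distinct $1$-dimensional eigenspaces; and the characters $(+,-)$, $(-,+)$ are then excluded because they would force every divisor of the pencil to be fixed by $g_1$, respectively by $g_2$, which is impossible. You instead compute the full character of the $G$-representation on $H^0(L)$ via Atiyah--Bott: Riemann--Roch gives $h^0=2$, $h^1=0$; the fixed-point-free translation $g_1$ gives trace $0$; for $g_2=-\mathrm{id}$ the four contributions $\frac{-1}{2},\frac{-1}{2},\frac{+1}{2},\frac{+1}{2}$ again give trace $0$; together with $\dim H^0(L)^{++}\geq 1$ (needed to rule out the $(0,1,1,0)$ solution of your linear system) this pins down the decomposition. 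Your local fiber computation at $o$ and $\eta$ (action $-1$ on the frame $1/w$ for the linearization making the canonical section invariant, which is exactly the paper's ``rational functions'' linearization) is the delicate point, and you got the signs right. The trade-off: the paper's argument is elementary and exhibits the two eigenvectors geometrically as the invariant divisors, while your method is more systematic and mechanical, requires no enumeration of invariant divisors, and generalizes readily to computing eigenspace dimensions for other invariant bundles -- the kind of count the paper later performs in section 2 by a different device (character sheaves of the bidouble cover $E_i'\to\PP^1$).
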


\begin{proof}
Obviously, since the $G$ linearization is obtained by considering the 
vector space of rational
functions with polar divisor at most $[o] + [\eta]$, the  subspace 
$H^0(E, \hol_E([o] + [\eta]))^{++} $
has dimension at least 1. On the other hand, there are exactly two 
$G$ invariant divisors
in the linear system $ | [o] + [\eta] |$.

Since, if $[P] + [Q] \in | [o] + [\eta] |$ is $G$ invariant, then 
$g_1([P] + [Q]) = [P+ \eta] + [Q+ \eta] = [P] + [Q]$, hence $[P+ 
\eta] = [Q]$ . Since $[P] + [Q] \equiv [o] + [\eta]$, $P, Q$ are $2$ 
- torsion points of $E$ (which automatically implies $g_2([P] + [Q]) 
= [-P] + [-Q] = [P] + [Q]$), and we have shown
that there are exactly two $G$-invariant divisors.

Therefore $H^0(E, \hol_E([o] + [\eta]))$ splits as the direct sum of 
two 1-dimensional eigenspaces,
one of which is $H^0(E, \hol_E([o] + [\eta]))^{++} $.

It suffices now to show that

$$H^0(E, \hol_E([o] + [\eta]))^{+-} = H^0(E, \hol_E([o] + [\eta]))^{-+} =0.$$

In fact, if this were not the case, all the divisors in the linear 
system $ | [o] + [\eta] |$ would be
invariant by either $g_1$ or by $g_2$.

  The first possibility was already excluded above,
while the second one means that, for each point $P$,   $[P] + [\eta - 
P] \in | [o] + [\eta] |$ satisfies $g_2([P] + [\eta - P]) = [-P] + [P 
-
\eta] = [P] + [\eta - P]$, which implies $ [P] = [-P]$, a contradiction.

\end{proof}

Consider  now two complex elliptic curves $E'_1, E'_2$, which can be 
written as quotients
$E_i' := \CC / \Lambda'_i$, $i=1,2$, with $\Lambda'_i := \ZZ e_i 
\oplus \ZZ e_i'$.

We consider the affine transformations $\gamma_1, \ \gamma_2 \in 
\mathbb{A}(2,\CC)$, defined as follows:
$$
\gamma_1 \begin{pmatrix}
   z_1\\z_2
\end{pmatrix} := \begin{pmatrix}
   z_1 + \frac{e_1}{2}\\- z_2
\end{pmatrix}, \ \ \gamma_2 \begin{pmatrix}
   z_1\\z_2
\end{pmatrix} := \begin{pmatrix}
   - z_1\\ z_2 + \frac{e_2}{2}
\end{pmatrix},
$$

\noindent
and let $\Gamma \leq \mathbb{A}(2,\CC)$ be the affine group generated 
by $\gamma_1, \gamma_2$ and by the translations $e_1, e_1', e_2, 
e_2'$.
\begin{rema}\label{enriqu}
i) $\Gamma$ contains the lattice $ \Lam'_1 \oplus \Lam'_2$,
hence $\Ga$ acts  on $E_1' \times E_2'$ inducing a faithful action
of $G:= (\ZZ /2 \ZZ)^2$ on $E_1' \times E_2'$.

\noindent
ii) While $\gamma_1, \ \gamma_2$ have no fixed points on $E_1' \times 
E_2'$, the involution $\gamma_1 \gamma_2$ has $16$ fixed points on 
$E_1' \times E_2'$. It is easy to see that the quotient $Y:=(E_1' 
\times E_2')/G$ is an Enriques surface having $8$ nodes, with 
canonical double cover the Kummer surface $(E_1' \times 
E_2')/<\gamma_1 \gamma_2>$.
\end{rema}

We will in the sequel lift the $G$ - action on $E_1' \times E_2'$ to 
an appropriate ramified double cover $\hat{S}$ and in such a way that 
$G$ acts freely on $\hat{S}$.

Consider the  geometric line bundle $\mathbb{L}$ on $E_1' 
\times E_2'$, whose invertible sheaf of sections is given by:
$$
\hol_{E_1'\times E_2'}(\mathbb{L}) :=p_1^*\hol_{E_1'}([o_1] + 
[\frac{e_1}{2}]) \otimes p_2^*\hol_{E_2'}([o_2] + [\frac{e_2}{2}]),
$$
where $p_i: E_1'\times E_2' \rightarrow E_i'$ is the projection onto 
the i-th factor.

\medskip

\begin{rema}\label{twist}
By remark \ref{invdiv}, the divisor $[o_i] + [\frac{e_i}{2}] \in 
Div^2(E_i')$ is invariant under $G$. Whence, we get a natural 
$G$-action on  $\mathbb{L}$.
But this is not the $G$-action on  $\mathbb{L}$ that we shall consider.

In fact, any two $G$ - actions on $\mathbb{L}$ differ by a character 
$\chi : G \rightarrow \CC^*$. We shall twist the above natural action 
of $\mathbb{L}$ by the character such that $\chi(\gamma_1) = 1$, 
$\chi(\gamma_2) = -1$. We shall call this twisted $G$-action   the 
canonical one.
\end{rema}

\begin{defin}
Consider the canonical $G$-action  on  $\mathbb{L}$ and on all its 
tensor powers,
and let
$$f \in H^0(E_1' \times E_2', p_1^*\hol_{E_1'}(2[o_1] + 
2[\frac{e_1}{2}]) \otimes p_2^*\hol_{E_2'}(2[o_2] + 
2[\frac{e_2}{2}]))^G$$
be a $G$ - invariant section of $\mathbb{L}^{\otimes 2}$.

  Denoting by $w$ a fibre coordinate of $\mathbb{L}$, let $\hat{X}$ be 
the double cover of $E_1' \times E_2'$ branched in $\{ f=0\}$, i.e., 
set $$
\hat{X} = \{w^2 = f(z_1,z_2) \} \subset \mathbb{L}.
$$
Then $\hat{X}$ is a $G$ - invariant hypersurface in $\mathbb{L}$, and 
we define the canonical model of a {\em Keum-Naie surface} to be the 
quotient   of $\hat{X}$ by the  $G$ - action.

\noindent
More precisely, we define $S$ to be a {\em Keum - Naie surface}, if
\begin{itemize}
  \item $G$ acts freely on $\hat{X}$, and
\item $\{f = 0 \}$ has only {\em non-essential singularities}, i.e., 
$\hat{X}$ has canonical singularities
(at most rational double points);
\item $S$ is the minimal resolution of singularities of $X:= \hat{X} / G$.
\end{itemize}
\end{defin}

\begin{rema}
One might also call the above surfaces`primary Keum-Naie surfaces'. 
In fact a similar construction, applied to the case where the action 
of $G$ has fixed points at some nodal singularities of
some special $\hat{X}$, produces other surfaces, which could 
appropriately be named  `secondary Keum-Naie surfaces'.
\end{rema}
\begin{lemma}
If
$$
f \in H^0(E_1' \times E_2', p_1^*\hol_{E_1'}(2[o_1] + 
2[\frac{e_1}{2}]) \otimes p_2^*\hol_{E_2'}(2[o_2] + 
2[\frac{e_2}{2}]))^G
$$ is such that $\{(z_1,z_2) \in E_1' \times E_2' \ | \ f(z_1, z_2) = 
0 \} \cap \Fix(\gamma_1  \gamma_2) = \emptyset$, then $G$ acts freely 
on $\hat{X}$.
\end{lemma}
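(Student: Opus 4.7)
Plan. I would check separately that each non-trivial element of $G$ acts freely on $\hat{X}$. By Remark~\ref{enriqu}(ii), both $\gamma_1$ and $\gamma_2$ act freely on the base $E_1' \times E_2'$, and since $\hat{X} \to E_1' \times E_2'$ is $G$-equivariant, they act freely on $\hat{X}$ as well. So only the involution $\gamma_1 \gamma_2$, which has $16$ fixed points on the base, is genuinely at issue.

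Let $p \in \Fix(\gamma_1 \gamma_2)$. By hypothesis $f(p) \neq 0$, so the fiber of $\hat{X}$ over $p$ consists of two distinct points $(p, w_0)$ and $(p, -w_0)$ with $w_0^2 = f(p)$. The lift of $\gamma_1\gamma_2$ to $\hat{X}$ comes from its action on the total space of $\mathbb{L}$, which on the fiber $\mathbb{L}_p$ is multiplication by some scalar $\lambda(p)$, necessarily with $\lambda(p)^2 = 1$ because $\gamma_1\gamma_2$ has order two in $G$. The induced action on the fiber of $\hat{X}$ sends $(p,w)$ to $(p, \lambda(p) w)$, so the two points $(p, \pm w_0)$ are swapped, and hence the action is free at $p$, precisely when $\lambda(p) = -1$.

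To pin down $\lambda(p)$ I would invoke the definition of the canonical linearization in Remark~\ref{twist}: it is the natural linearization twisted by the character $\chi$ with $\chi(\gamma_1\gamma_2) = \chi(\gamma_1)\chi(\gamma_2) = 1 \cdot (-1) = -1$, so $\lambda^{\mathrm{can}}(p) = -\lambda^{\mathrm{nat}}(p)$, and it suffices to show that $\lambda^{\mathrm{nat}}(p) = +1$. For this, consider the tautological section $s_D$ of $\mathbb{L} = \hol(D)$ cutting out the $G$-invariant divisor $D := p_1^{-1}([o_1]+[\frac{e_1}{2}]) + p_2^{-1}([o_2]+[\frac{e_2}{2}])$: by the very definition of the natural linearization, $s_D$ is $G$-invariant. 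A direct calculation locates the $16$ fixed points of $\gamma_1\gamma_2$ as the solutions of $2z_1 \equiv \frac{e_1}{2} \pmod{\Lambda_1'}$ and $2z_2 \equiv -\frac{e_2}{2} \pmod{\Lambda_2'}$, and their first coordinate never equals $o_1$ or $\frac{e_1}{2}$ in $E_1'$ (and similarly for the second coordinate), so none of them lies on the support of $D$. Hence $s_D(p) \neq 0$, and the $G$-invariance of $s_D$ forces the fiber scalar $\lambda^{\mathrm{nat}}(p) = +1$, so $\lambda^{\mathrm{can}}(p) = -1$, as desired. The only point requiring care is the bookkeeping between the natural and the canonical linearizations; everything else is immediate once the fixed points are located against the support of $D$.
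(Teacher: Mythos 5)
Your proof is correct and takes essentially the same route as the paper: $\gamma_1,\gamma_2$ have no fixed points on $\hat{X}$ because they act freely on the base, and $\gamma_1\gamma_2$ acts freely over its $16$ fixed points because the canonical linearization acts by $-1$ on the fibre of $\mathbb{L}$ there, so the two sheets $w=\pm w_0$ are exchanged whenever $f\neq 0$ at such a point. The only difference is that you verify the sign $-1$ explicitly (the fixed points avoid the support of the tautological divisor, so the natural linearization acts by $+1$, and twisting by the character with $\chi(\gamma_1\gamma_2)=-1$ gives $-1$), a point the paper simply quotes from Remark \ref{twist}.
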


\begin{proof}
  Recall that $\gamma_1$, $\gamma_2$ do not have fixed points on $E_1' 
\times E_2'$, whence they have no fixed points on $\hat{X}$. Since by 
\ref{twist} $(\gamma_1  \gamma_2)(w) = -w$, it follows that $G$ acts 
freely on $\hat{X}$ if and only if $\{f=0\}$ does not intersect the 
fixed points of $\gamma_1  \gamma_2$ on $E_1' \times E_2'$.

\end{proof}

\begin{prop}
Let $S$ be a Keum - Naie surface. Then $S$ is a minimal surface of 
general type with
\begin{itemize}
  \item[i)] $K_S^2 = 4$,
\item[ii)] $p_g(S) = q(S) = 0$,
\item[iii)] $\pi_1(S) = \Gamma$.
\end{itemize}
\end{prop}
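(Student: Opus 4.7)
The plan is to read off all invariants of $S$ by combining two factorizations of $\hat X\to S$: namely, the double cover $\pi\colon \hat X\to A:=E_1'\times E_2'$ branched along $B=\{f=0\}\in|\mathbb{L}^{\otimes 2}|$, and the \'etale Galois quotient $\hat X\to S$ of degree $|G|=4$. The fundamental group will then be identified by lifting the $\Gamma$-action from $A$ to a suitable cover of $\hat X$.

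For the numerical part, since $K_A\equiv 0$, the standard double cover formulas give $K_{\hat X}=\pi^*\mathbb{L}$ and $\pi_*\hol_{\hat X}=\hol_A\oplus\mathbb{L}^{-1}$. The bundle $\mathbb{L}$ has bidegree $(2,2)$ on $E_1'\times E_2'$, so $\mathbb{L}^2=8$ and $\mathbb{L}$ is ample; hence $K_{\hat X}^2=2\mathbb{L}^2=16$, $K_{\hat X}$ is nef and big (so $\hat X$ is minimal of general type, with RDPs irrelevant), and $\chi(\hol_{\hat X})=\chi(\hol_A)+\chi(\mathbb{L}^{-1})=0+\tfrac12\mathbb{L}^2=4$. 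Since $G$ acts freely, the quotient map is \'etale of degree $4$, and multiplicativity yields $K_S^2=4$ and $\chi(\hol_S)=1$, with $S$ still minimal of general type.

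To get $q(S)=0$: K\"unneth applied to the restrictions of $\mathbb{L}^{-1}$ to each elliptic factor (both of degree $-2$, hence with $h^0=0$) gives $h^1(A,\mathbb{L}^{-1})=0$, so $q(\hat X)=h^1(\hol_A)=2$, and the pullback $\pi^*\colon H^0(\Omega^1_A)\hookrightarrow H^0(\Omega^1_{\hat X})$ is an isomorphism of $2$-dimensional spaces. The linear parts of $\gamma_1,\gamma_2$ act on $H^0(\Omega^1_A)=\CC\,dz_1\oplus\CC\,dz_2$ as $\mathrm{diag}(1,-1)$ and $\mathrm{diag}(-1,1)$, so there is no nonzero $G$-invariant $1$-form. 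Since $\hat X\to S$ is \'etale Galois, $H^0(\Omega^1_S)=H^0(\Omega^1_{\hat X})^G=0$, hence $q(S)=0$ and $p_g(S)=\chi(\hol_S)-1+q(S)=0$.

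The remaining step, and the most delicate one, is $\pi_1(S)=\Gamma$. The crucial input is $\pi_1(\hat X)=\pi_1(A)=\Lambda'_1\oplus\Lambda'_2$, which follows from a Lefschetz-type argument: $\hat X$ sits as a sufficiently ample hypersurface in the total space of $\mathbb{L}$, which deformation retracts onto $A$ (and RDPs do not affect $\pi_1$). Granting this, I pull the double cover back along the universal covering $\CC^2\to A$ to obtain a simply connected double cover $\tilde X\to\CC^2$ branched in the preimage of $B$. The canonical $G$-linearization of $\mathbb{L}$, for which $f$ is $G$-invariant, lets the affine $\Gamma$-action on $\CC^2$ lift to $\tilde X$; the normal subgroup $\Lambda'_1\oplus\Lambda'_2\subset\Gamma$ acts as deck transformations (hence freely), and $G=\Gamma/(\Lambda'_1\oplus\Lambda'_2)$ acts freely on $\hat X$ by the preceding lemma, so $\Gamma$ acts freely on $\tilde X$ with quotient $S$. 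Since $\tilde X$ is simply connected, $\pi_1(S)=\Gamma$.
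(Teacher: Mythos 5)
Your treatment of (i) and (ii) is correct, and in (ii) you take a legitimate shortcut compared with the paper: instead of decomposing $H^0(\hat X,\hol(K_{\hat X}))$ into character eigenspaces to see that the $G$-invariant part vanishes (which is what the paper does to get $p_g=0$), you compute $\chi(\hol_{\hat X})=4$ from the double cover, divide by $|G|$ to get $\chi(\hol_S)=1$, prove $q(S)=0$ exactly as the paper does (no $G$-invariant $1$-forms among $dz_1,dz_2$), and deduce $p_g=0$. That is fine, and your remarks on minimality ($K_{\hat X}=\pi^*\mathbb{L}$ nef and big) are also fine.

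The gap is in (iii), at precisely the point where the paper has to work: your assertion that $\pi_1(\hat X)=\pi_1(E_1'\times E_2')$ ``follows from a Lefschetz-type argument'' because $\hat X$ is ``a sufficiently ample hypersurface in the total space of $\mathbb{L}$'' is not a theorem you can quote. The total space of $\mathbb{L}$ is non-compact, and in its natural projective completion $\PP(\hol\oplus\mathbb{L})$ the surface $\hat X$ is disjoint from the section at infinity, hence is \emph{not} an ample divisor there; so the hyperplane-section form of Lefschetz does not apply. The correct tool is a Lefschetz-type theorem for double covers branched along an ample divisor (the paper cites Mandelbaum--Moishezon), but that result is applied for a \emph{smooth} branch curve, whereas the definition of a Keum--Naie surface allows $\{f=0\}$ to be singular, with $\hat X$ acquiring rational double points. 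Your parenthetical ``RDPs do not affect $\pi_1$'' only compares $\pi_1(\hat X)$ with $\pi_1$ of its resolution $\hat S$; it does not justify the Lefschetz statement for a singular branch divisor, where the kernel of $\pi_1(A\setminus B)\to\pi_1(A)$ is genuinely harder to control (this is Nori-type territory, with hypotheses). The paper closes exactly this gap by invoking Brieskorn--Tyurina simultaneous resolution: over the connected parameter space of admissible $f$ the resolutions $\hat S$ form (after base change) a smooth family, so $\pi_1(\hat S)$ is constant and one may assume the branch curve smooth, and only then apply the Mandelbaum--Moishezon theorem. Your final step --- pulling back to $\CC^2$, lifting the $\Gamma$-action via the canonical linearization, and deducing $\pi_1(S)=\Gamma$ from freeness of $G$ on $\hat X$ and simple connectivity of the pullback --- is a correct way to finish (it makes explicit what the paper leaves implicit), but it rests entirely on the unproved isomorphism $\pi_1(\hat X)\cong\Lambda_1'\oplus\Lambda_2'$, so as written the proof of (iii) is incomplete.
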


\begin{proof}
i) Let $\pi : \hat{X} \rightarrow E_1' \times E_2'$ be the above 
double cover branched on $\{f = 0 \}$. Then $K_{\hat{X}} \equiv 
\pi^*(K_{E_1' \times E_2'} + p_1^*([o_1] + [\frac{e_1}{2}]) + 
p_2^*([o_2] + [\frac{e_2}{2}]))$, whence $K_{\hat{X}}^2 = 2 \cdot 
(p_1^*([o_1] + [\frac{e_1}{2}]) + p_2^*([o_2] + [\frac{e_2}{2}]))^2 = 
2 \cdot 8 = 16$. Therefore $K_S^2 = K_X^2 = \frac{K_{\hat{X}}^2}{|G|} 
= 4$.

ii) Let $\sigma : \hat{S} \rightarrow \hat{X}$ be the minimal 
resolution of singularities of $\hat{X}$. Then $S = \hat{S} /G$, and
$$
H^0(S, \Omega^1_S) = H^0(\hat{S}, \Omega^1_{\hat{S}})^G.
$$

Since $\pi \circ \sigma : \hat{S} \rightarrow E_1' \times E_2'$ has 
degree $2$, it is the Albanese map of $\hat{S}$, and we have  that 
$H^0(\hat{S}, \Omega^1_{\hat{S}}) = H^0(E_1' \times E_2', 
\Omega^1_{E_1' \times E_2'}) \cong \CC dz_1 \oplus \CC dz_2$. Hence
$$
  H^0(S, \Omega^1_S)=  H^0(\hat{S}, \Omega^1_{\hat{S}})^G = 0 ,
$$
i.e., $q(S) = 0$.

Observe that since $G$ acts freely
$$
H^0(\hat{X},\hol(K_{\hat{X}}))^G = H^0(X,\hol(K_X))=H^0(S,\Omega^2_S).
$$
Consider now the decomposition of
$$
V:= H^0(\hat{X},\hol(K_{\hat{X}})) = H^0(\hat{X},\hol(K_{\hat{X}}))^+ 
\oplus H^0(\hat{X},\hol(K_{\hat{X}}))^-
$$
  in invariant and antiinvariant part for the action of the involution 
$\sigma$ of the double cover $\pi : \hat{X} \rightarrow E_1' \times 
E_2'$ ($\sigma (z_1,z_2,w) = (z_1,z_2,-w) $).

  Note that
\begin{itemize}
\item[a)] $H^0(\hat{X},\hol(K_{\hat{X}}))^+ = H^0(E_1' \times E_2', 
\Omega^2_{E_1' \times E_2'}) = \CC (dz_1\wedge dz_2)$,
\item[b)] $H^0(\hat{X},\hol(K_{\hat{X}}))^- \cong H^0(E_1' \times 
E_2', \Omega^2_{E_1' \times E_2'}(\mathbb{L}))$.
\end{itemize}

In the uniformizing coordinates the first summand a) is generated by 
$dz_1 \wedge dz_2$, which is an eigenvector for the $G$-action, with 
character $\chi$ such that $\chi(\ga_1) =\chi(\ga_2) =-1$. We shall 
call this eigenspace  $V^{--}$.

Each vector  $y $ in the  addendum  b) can be written as
$$
y= \frac{\varphi_1(z_1)\varphi_2(z_2)}{w} dz_1 \wedge dz_2,
$$
where $\varphi_i \in H^0(E_i', \hol_{E_i'}([o_i]+[\frac{e_i}{2}]))$.

Recall that (cf. lemma \ref{h++}) $H^0(E_i', 
\hol_{E_i'}([o_i]+[\frac{e_i}{2}])) =: H_i$ splits as $H_i^{++} 
\oplus H_i^{--}$ (observe that exchanging the roles of $g_1$ and 
$g_2$ in lemma
  \ref{h++} makes fortunately no difference).

  Using that $\gamma_1(w) = w$, $\gamma_2(w) =-w$ and that $dz_1 
\wedge dz_2 \in V^{--}$, we get:
\begin{multline}
\frac{\varphi_1(z_1)\varphi_2(z_2)}{w} dz_1 \wedge dz_2 \in V^{+-} \ 
\iff \varphi_1 \in H_1^{++} \ \wedge \ \varphi_2 \in H_2^{--} \ 
\rm{or} \\ \varphi_1 \in H_1^{--} \ \wedge \ \varphi_2 \in H_2^{++};
\end{multline}

\begin{multline}
\frac{\varphi_1(z_1)\varphi_2(z_2)}{w} dz_1 \wedge dz_2 \in V^{-+} \ 
\iff \varphi_1 \in H_1^{++} \ \wedge \ \varphi_2 \in H_2^{++} \ 
\rm{or} \\ \varphi_1 \in H_1^{--} \ \wedge \ \varphi_2 \in H_2^{--}.
\end{multline}
The above calculations show that both eigenspaces  $V^{-+} $, $V^{+-} 
$ are 2-dimensional.
Since the summand b) has dimension 4, we obtain then:
\begin{itemize}
  \item[i)] $H^0(\hat{X},\hol(K_{\hat{X}}))^{--} = \CC (dz_1\wedge dz_2)$,
\item[ii)] $H^0(\hat{X},\hol(K_{\hat{X}}))^{+-} = 
\{\frac{\varphi_1(z_1)\varphi_2(z_2)}{w} dz_1 \wedge dz_2 \ | \ 
(\varphi_1 \in H_1^{++} \ \rm{and} \ \varphi_2 \in H_2^{--}) \ 
\rm{or} \ (\varphi_1 \in H_1^{--} \ \rm{and} \ \varphi_2 \in 
H_2^{++})\}$ has dimension 2;
\item[iii)] $H^0(\hat{X},\hol(K_{\hat{X}}))^{-+} = 
\{\frac{\varphi_1(z_1)\varphi_2(z_2)}{w} dz_1 \wedge dz_2 \ | \ 
(\varphi_1 \in H_1^{++} \ \rm{and} \ \varphi_2 \in H_2^{++}) \ 
\rm{or} \ (\varphi_1 \in H_1^{--} \ \rm{and} \ \varphi_2 \in 
H_2^{--})\}$ has dimension 2;
\item[iv)] $H^0(\hat{X},\hol(K_{\hat{X}}))^{++} = 0$.
\end{itemize}
In particular, we get $p_g(S) = \dim H^0(\hat{X},\hol(K_{\hat{X}}))^{++} = 0$.

\noindent
iii) it suffices to show that the fundamental group of $ \hat{S}$ 
maps isomorphically to the fundamental group of $E'_1 \times E'_2$. 
By the theorem of Brieskorn-Tyurina (\cite{brieskorn1}, \cite{brieskorn2}, \cite{tjurina}) we can reduce to the 
case where
$ \hat{X}$ is smooth, since the parameter space is connected, and 
there is a non empty open set of smooth branch curves $D$.

When $D$ is smooth, we conclude by the Lefschetz type theorem of 
Mandelbaum and Moishezon (\cite{mm}, page 218),
since $D$ is ample.
\end{proof}

\section{The moduli space of Keum - Naie surfaces}

The aim of this section is to prove the following result

\begin{theo}\label{locmod}
The connected component of the Gieseker moduli space corresponding to 
Keum - Naie surfaces is normal, irreducible, unirational of dimension 
equal to 6. Moreover, the base of the Kuranishi family of the 
canonical model $X$ of a  Keum-Naie surface is smooth.
\end{theo}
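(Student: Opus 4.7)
The plan is to describe the moduli component explicitly via the construction of Section 1 and then to establish smoothness of the Kuranishi base by a deformation-theoretic argument. Let $\mathcal{B}$ denote the 2-dimensional rational coarse space of pairs $(E_1', E_2')$ of elliptic curves (the choice of 2-torsion $e_i/2$ is discrete, contributing no continuous parameters). Over $\mathcal{B}$ we have a natural vector bundle $\mathcal{V}$ whose fibre over $(E_1', E_2')$ is $H^0(E_1' \times E_2', \mathbb{L}^{\otimes 2})^G$; denote by $\mathcal{P} \subset \PP(\mathcal{V})$ the Zariski-open subset cut out by the two openness conditions of the Keum--Naie construction (free $G$-action on $\hat{X}$, and at worst rational double points on $\hat{X}$).

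The first task is to compute $N := \dim H^0(\mathbb{L}^{\otimes 2})^G$. By K\"unneth, $H^0(\mathbb{L}^{\otimes 2}) = H^0(L_1^{\otimes 2}) \otimes H^0(L_2^{\otimes 2})$, and for the abelian group $G$ one has $\dim(V \otimes W)^G = \sum_{\chi} (\dim V^{\chi})(\dim W^{\chi})$. Each $H^0(L_i^{\otimes 2})$ is a 4-dimensional $G$-representation whose character multiplicities can be read off from the holomorphic Lefschetz formula: on $E_1'$, $\gamma_1$ acts by a fixed-point free translation (giving trace zero), while $\gamma_2$ acts as $-1$ (four fixed points, each contributing fibre-action $+1$ on the squared bundle, giving trace $2$). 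This pins down the multiplicities on $H^0(L_1^{\otimes 2})$ as $(2,0,1,1)$ in the order $((++),(+-),(-+),(--))$ with respect to $(\gamma_1,\gamma_2)$; for $H^0(L_2^{\otimes 2})$ the roles of the two generators are exchanged on $E_2'$, so the middle two entries get swapped, giving $(2,1,0,1)$. Summing character-products yields $N = 2\cdot 2 + 0 \cdot 1 + 1 \cdot 0 + 1 \cdot 1 = 5$, hence $\dim \mathcal{P} = 2 + (5-1) = 6$. As a Zariski-open in a projective bundle over a rational base, $\mathcal{P}$ is smooth, irreducible and unirational.

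The classifying morphism $\mu \colon \mathcal{P} \to \mathfrak{M}^{can}_{1,4}$ has finite fibres (parameterized by the residual choices of 2-torsion and by lattice automorphisms of the product), so its image is an irreducible unirational subvariety of dimension 6. To upgrade this image to a full connected component and to prove smoothness of the Kuranishi base, it suffices to show that the tangent map of $\mu$ is surjective at every point, i.e., that every first-order deformation of the canonical model $X$ arises from a first-order deformation of the data $(E_1', E_2', f)$. The strategy is to lift such deformations to the \'etale $G$-cover $\hat{X}$ of $X$: since $q(\hat{X})=2$ is a deformation invariant and the Albanese morphism has locally constant degree $2$ (by construction), any small deformation of $\hat{X}$ remains a flat double cover of a deformation of $E_1' \times E_2'$, branched along a $G$-invariant divisor in the deformed linear system $|\mathbb{L}^{\otimes 2}|$. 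Combined with the standard deformation theory of double covers, this yields $h^1(X,T_X) \le 6$; together with the 6-dimensional effective family $\mathcal{P}$ this forces equality, vanishing of the obstructions in $H^2(X,T_X)$, and hence smoothness of the Kuranishi base. The main obstacle will be precisely this deformation-lifting step: one must verify rigorously that the $G$-equivariance of the double cover and of the branch divisor $\{f=0\}$ is preserved in families, which in turn forces the Albanese target to remain a product of two elliptic curves.
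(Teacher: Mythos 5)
Your first half (the computation that the $G$-invariant part of $H^0(\mathbb{L}^{\otimes 2})$ is $5$-dimensional, hence a $6$-dimensional irreducible unirational parameter space mapping quasi-finitely to moduli) agrees with the paper, which gets the same multiplicities $(2,0,1,1)$ and $(2,1,0,1)$ by viewing $E_i'\ra\PP^1$ as a bidouble cover and reading off the character sheaves; note only that the traces of $\gamma_1,\gamma_2$ alone do \emph{not} pin down the multiplicities (both $(2,0,1,1)$ and $(1,1,2,0)$ have traces $4,0,2$), so you also need the trace of $\gamma_1\gamma_2$ or an argument such as Lemma \ref{h++}.

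The genuine gap is the second half. The heart of the theorem is the bound $\dim \Ext^1(\Omega^1_X,\hol_X)\le 6$ for \emph{every} canonical model $X$ (note that $X$ may be singular, so the relevant tangent space is $\Ext^1(\Omega^1_X,\hol_X)$, not $H^1(X,T_X)$), and your proposal does not prove it: you reduce it to the assertion that every small deformation of $\hat X$ remains a $G$-equivariant double cover of a product of elliptic curves, and you yourself flag this lifting step as unverified. Even if that geometric statement were established for actual (integrable) deformations, it would not bound the Zariski tangent space: there could a priori be obstructed first-order deformations making the Kuranishi base singular or non-reduced while its underlying reduced germ is still $6$-dimensional — ruling this out is precisely what must be proved, and it is what the paper's argument supplies. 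The paper does this cohomologically: $\Ext^1(\Omega^1_X,\hol_X)=\Ext^1(\Omega^1_{\hat X},\hol_{\hat X})^{++}$, and Manetti's $G$-equivariant exact sequence $0\ra \pi^*\Omega^1_Y\ra\Omega^1_{\hat X}\ra\hol_R(-R)\ra 0$ for the flat double cover $\pi\colon\hat X\ra Y=E_1'\times E_2'$ yields a long exact sequence whose $(++)$-part gives $\dim\Ext^1(\Omega^1_{\hat X},\hol_{\hat X})^{++}\le \dim H^0(\hol_D(D))^{++}+\dim H^0(\hol_D(L))^{++}=4+2=6$, using $\Ext^1(\pi^*\Omega^1_Y,\hol_{\hat X})^{++}\cong (H^1(\hol_{\hat X})^{++})^{\oplus 2}=0$. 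Two further points: equality of dimensions gives smoothness of the Kuranishi base directly (tangent dimension $\le 6$ and an effective $6$-dimensional family), and does \emph{not} yield, nor require, vanishing of $H^2(X,T_X)$; and surjectivity of the tangent map of $\mu$ only makes the image \emph{open}, hence an irreducible component — the upgrade to a \emph{connected} component needs closedness of the Keum--Naie locus under deformation, which the paper obtains from the separate homotopy-invariance result (Theorem \ref{homotopy}), an ingredient absent from your argument.
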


In order to describe the moduli space of Keum-Naie surfaces we shall 
preliminarily describe
the vector space
$$
H^0(E_1' \times E_2', p_1^*\hol_{E_1'}(2[o_1] + 2[\frac{e_1}{2}]) 
\otimes p_2^*\hol_{E_2'}(2[o_2] + 2[\frac{e_2}{2}]))^G.
$$
We consider $E_1'$ (resp. $E_2'$) as a bidouble cover of $\PP^1$ 
ramified in $4$ points $\{0,1,\infty,P\}$ (resp. $\{0,1,\infty,Q\}$), 
where $G = (\ZZ /2 \ZZ)^2  = \{0,g_1,g_2,g_3 := g_1+g_2 \}$ acts as 
follows:
$$
g_1(z) = z+\frac{e_1}{2}, \ \ g_2(z) = -z \ \ \rm{on} \ \ E_1',
$$
$$
g_1(z) = -z, \ \ g_2(z) = z +\frac{e_2}{2}\ \ \rm{on} \ \ E_2'.
$$
We denote the respective bidouble covering maps from $E_i'$ to 
$\PP^1$ by $\pi_i$.
Observe moreover that the quotient of $E_1'$ by the action of $g_1$ 
is an elliptic curve
$E_1$, while the quotient of $E_1'$ by the action of $g_2$ (resp. 
$g_3$) is isomorphic
to $\PP^1$.

\begin{rema}
  It is immediate from the above remark that the character 
eigensheaves of the direct image sheaf ${\pi_1}_* \hol_{E'_1}$ for 
the bidouble cover $\pi_1 : E_1' \rightarrow \PP^1$ are:
$$
\mathcal{L}_1^{-+} = \hol_{\PP^1}(1), \ \mathcal{L}_1^{+-} 
=\hol_{\PP^1}(2), \ \mathcal{L}_1^{--} =\hol_{\PP^1}(1).
$$
In fact, for instance, the direct image on $\PP^1$ of the sheaf of functions
on $E'_1 / g_1$ must be $ \cong \hol_{\PP^1}\oplus \hol_{\PP^1}(-2)$
and it equals $  \hol_{\PP^1}\oplus ({\mathcal{L}_1^{+-}})^{-1}$.

Similarly for $\pi_2 : E_2' \rightarrow \PP^1$ we have the character sheaves
$$
\mathcal{L}_2^{-+} = \hol_{\PP^1}(2), \ \mathcal{L}_2^{+-} 
=\hol_{\PP^1}(1), \ \mathcal{L}_2^{--} =\hol_{\PP^1}(1).
$$
Since $\hol_{E_i'}(2[o_i] + 2[\frac{e_i}{2}]) = 
\pi_i^*(\hol_{\PP^1}(1))$ we get
$$
H^0(E_i',\hol_{E_i'}(2[o_i] + 2[\frac{e_i}{2}])) = H^0(\PP^1, 
\hol_{\PP^1}(1) \otimes (\pi_i)_* \hol_{E_i'}),
$$
  and therefore:
\begin{itemize}
\item[i)] $V_1^{++} := H^0(E_1',\hol_{E_1'}(2[o_1] + 
2[\frac{e_1}{2}]))^{++} = H^0(\PP^1, \hol_{\PP^1}(1)) \cong \CC^2$;
\item[ii)] $V_1^{+-} := H^0(\hol_{E_1'}(2[o_1] + 
2[\frac{e_1}{2}]))^{+-} = H^0(\hol_{\PP^1}(1) \otimes 
(\mathcal{L}_1^{+-})^{-1}) = 0$;
\item[iii)] $V_1^{-+} := H^0(\hol_{E_1'}(2[o_1] + 
2[\frac{e_1}{2}]))^{-+} = H^0(\hol_{\PP^1}(1) \otimes 
(\mathcal{L}_1^{-+})^{-1}) \cong \CC$;
\item[iv)] $V_1^{--} := H^0(\hol_{E_1'}(2[o_1] + 
2[\frac{e_1}{2}]))^{--} = H^0(\hol_{\PP^1}(1) \otimes 
(\mathcal{L}_1^{--})^{-1}) \cong \CC$;
\item[v)] $V_2^{++} := H^0(E_2',\hol_{E_2'}(2[o_2] + 
2[\frac{e_2}{2}]))^{++} = H^0(\PP^1, \hol_{\PP^1}(1)) \cong \CC^2$;
\item[vi)] $V_2^{+-} := H^0(\hol_{E_2'}(2[o_2] + 
2[\frac{e_2}{2}]))^{+-} = H^0(\hol_{\PP^1}(1) \otimes 
(\mathcal{L}_2^{+-})^{-1}) \cong \CC$;
\item[vii)] $V_2^{-+} := H^0(\hol_{E_2'}(2[o_2] + 
2[\frac{e_2}{2}]))^{-+} = H^0(\hol_{\PP^1}(1) \otimes 
(\mathcal{L}_2^{-+})^{-1}) = 0$;
\item[viii)] $V_2^{--} := H^0(\hol_{E_2'}(2[o_2] + 
2[\frac{e_2}{2}]))^{--} = H^0(\hol_{\PP^1}(1) \otimes 
(\mathcal{L}_2^{--})^{-1}) \cong \CC$.

\end{itemize}
\end{rema}

As a consequence of the above remark, we get
\begin{lemma}\label{44inv}
\begin{multline*}
1) \ \ H^0(E_1' \times E_2', p_1^*\hol_{E_1'}(2[o_1] + 
2[\frac{e_1}{2}]) \otimes p_2^*\hol_{E_2'}(2[o_2] + 
2[\frac{e_2}{2}]))^{++} = \\
= (V_1^{++} \otimes V_2^{++}) \oplus (V_1^{--} \otimes V_2^{--}) \cong \CC^5;
\end{multline*}
\begin{multline*}
2) \ \ H^0(E_1' \times E_2', p_1^*\hol_{E_1'}(2[o_1] + 
2[\frac{e_1}{2}]) \otimes p_2^*\hol_{E_2'}(2[o_2] + 
2[\frac{e_2}{2}]))^{--} = \\
= (V_1^{++} \otimes V_2^{--}) \oplus (V_1^{--} \otimes V_2^{++}) 
\oplus (V_1^{-+} \otimes V_2^{+-}) \cong \CC^5;
\end{multline*}
\end{lemma}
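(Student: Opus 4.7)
The plan is to combine the Künneth formula with the $G$-character decomposition of $H^0(E_i',\hol_{E_i'}(2[o_i]+2[\frac{e_i}{2}]))$ recorded in the preceding remark. Setting $\mathcal{M}_i := \hol_{E_i'}(2[o_i]+2[\frac{e_i}{2}])$, Künneth furnishes a $G$-equivariant isomorphism
\[
H^0(E_1'\times E_2',\, p_1^*\mathcal{M}_1 \otimes p_2^*\mathcal{M}_2) \;\cong\; H^0(E_1',\mathcal{M}_1) \otimes H^0(E_2',\mathcal{M}_2),
\]
where $G$ acts diagonally. Consequently the $\chi$-eigenspace on the left decomposes as
\[
\bigoplus_{\chi_1\chi_2 = \chi} V_1^{\chi_1} \otimes V_2^{\chi_2}.
\]

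For part (1) I would enumerate the pairs $(\chi_1,\chi_2)\in G^*\times G^*$ satisfying $\chi_1\chi_2 = ++$. Since every character of $G$ has order two, this forces $\chi_1 = \chi_2$, producing four diagonal summands indexed by $++,+-,-+,--$. Using the dimensions from the preceding remark, the summands $V_1^{+-}\otimes V_2^{+-}$ and $V_1^{-+}\otimes V_2^{-+}$ vanish (because $V_1^{+-}=0$ and $V_2^{-+}=0$ respectively). Only $V_1^{++}\otimes V_2^{++}$ (of dimension $2\cdot 2=4$) and $V_1^{--}\otimes V_2^{--}$ (of dimension $1\cdot 1=1$) survive, giving the claimed $\CC^5$.

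For part (2) the pairs with $\chi_1\chi_2 = --$ are $(++,--),\,(--,++),\,(+-,-+),\,(-+,+-)$. The pair $(+-,-+)$ contributes zero since $V_1^{+-}=0$, and the remaining three summands are exactly those listed in the statement, with dimensions $2,2,1$, summing to $5$.

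There is no real obstacle here: the content of the lemma is pure bookkeeping once the Künneth isomorphism is combined with the character decomposition from the preceding remark. The only point worth emphasizing is the compatibility of the Künneth isomorphism with the diagonal $G$-action, which is standard and follows at once from the fact that $G$ acts on $E_1'\times E_2'$ through the two coordinate actions and on $p_i^*\mathcal{M}_i$ through its chosen linearization on $\mathcal{M}_i$.
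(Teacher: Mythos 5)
Your proof is correct and is essentially the paper's own argument: the authors likewise identify the $\chi$-eigenspace of the product via the (equivariant) K\"unneth decomposition $\bigoplus_{\chi_1\chi_2=\chi}V_1^{\chi_1}\otimes V_2^{\chi_2}$ and then read off the surviving summands $(V_1^{++}\otimes V_2^{++})\oplus(V_1^{--}\otimes V_2^{--})$ and $(V_1^{++}\otimes V_2^{--})\oplus(V_1^{--}\otimes V_2^{++})\oplus(V_1^{-+}\otimes V_2^{+-})$ from the dimension table in the preceding remark. Your bookkeeping of the vanishing summands ($V_1^{+-}=0$, $V_2^{-+}=0$) and the resulting dimensions $4+1=5$ and $2+2+1=5$ matches theirs exactly.
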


\begin{proof}
  This follows immediately from the above remark since
\begin{multline*}
1) \ \ H^0(E_1' \times E_2', p_1^*\hol_{E_1'}(2[o_1] + 
2[\frac{e_1}{2}]) \otimes p_2^*\hol_{E_2'}(2[o_2] + 
2[\frac{e_2}{2}]))^G = \\
= \bigoplus_{\chi \in G^*} (H^0(\hol_{E_1'}(2[o_1] + 2[\frac{e_1}{2}]))^{\chi} 
\otimes H^0(E_2',\hol_{E_2'}(2[o_2] + 2[\frac{e_2}{2}]))^{\chi^{-1}}) 
= \\
= (V_1^{++} \otimes V_2^{++}) \oplus (V_1^{--} \otimes V_2^{--}) 
\cong \CC^4 \oplus \CC;
\end{multline*}
and
\begin{multline*}
2) \ \ H^0(E_1' \times E_2', p_1^*\hol_{E_1'}(2[o_1] + 
2[\frac{e_1}{2}]) \otimes p_2^*\hol_{E_2'}(2[o_2] + 
2[\frac{e_2}{2}]))^{--}  = \\
= \bigoplus_{\chi \in G^*} (H^0(\hol_{E_1'}(2[o_1] + 
2[\frac{e_1}{2}]))^{\chi} \otimes H^0(E_2',\hol_{E_2'}(2[o_2] + 
2[\frac{e_2}{2}]))^{\chi^{-1} \chi'}) = \\
= (V_1^{++} \otimes V_2^{--}) \oplus (V_1^{--} \otimes V_2^{++}) 
\oplus (V_1^{-+} \otimes V_2^{+-}) \cong \CC^2 \oplus \CC^2 \oplus 
\CC,
\end{multline*}
where $\chi ' (g_1) = -1$, $\chi '(g_2) = -1$.

\end{proof}

Now we can conclude the proof of theorem \ref{locmod}.
\begin{proof}{(of thm. \ref{locmod})}
Note that $V_i^{++}$ is without base points, whence also $V_1^{++} 
\otimes V_2^{++}$ has no base points. Therefore a generic
$$f \in H^0(E_1' \times E_2', p_1^*\hol_{E_1'}(2[o_1] + 
2[\frac{e_1}{2}]) \otimes p_2^*\hol_{E_2'}(2[o_2] + 
2[\frac{e_2}{2}]))^G$$
has smooth  and irreducible zero divisor $D$ (observe that $D$ is ample).

We obtain a six dimensional rational family parametrizing  all the 
Keum- Naie surfaces simply by varying
the two points $P, Q$ in $\PP^1 \setminus \{0,1,\infty \}$, and 
varying $f$ in an open set of the
  bundle of 4 dimensional projective spaces associated to the rank 
five vector bundle with fibre
$$(V_1^{++} \otimes V_2^{++}) \oplus (V_1^{--} \otimes V_2^{--}).$$

We obtain an irreducible unirational algebraic subset  of the moduli space which, by
the results of the forthcoming section, is indeed a connected 
component of the Gieseker moduli space (cf. theorem \ref{homotopy}). 
The dimension of this component is equal to 6, since
if two surfaces $S, S'$ are isomorphic, then this isomorphism lifts 
to a $G$-equivariant
isomorphism between $\hat{S}$ and $\hat{S'}$, and we get in 
particular an isomorphism of
the corresponding Albanese surfaces carrying one branch locus $D$ to 
the other $D'$.
It is now easy to see that, since we have normalized the line bundle 
$\mathbb L$, the
  morphism of the base of the rational family to the moduli space is 
quasi finite.

We  shall show that for each canonical model $X$  the base $\mathfrak 
B_X$ of the Kuranishi family of deformations of $X$ is smooth of 
dimension 6. For this it suffices to show that the dimension of the
Zariski tangent space to  $\mathfrak B_X$ is at most 6, since we already
saw that  $ \dim (\mathfrak B_X) \geq 6$.

In fact we could also  show that for each canonical model $X$ the 
above six dimensional family
induces a morphism $\psi$ of the  smooth rational
base whose Kodaira-Spencer map is an isomorphism, whence  $\psi$ yields
an isomorphism of the base with $\mathfrak B_X$.

Observe moreover that the assertion about the normality of this
component of the Gieseker moduli space follows right away from the fact
that the moduli space $\mathfrak M_{\chi, K^2}$ is locally analytically
isomorphic to the quotient of the base of the Kuranishi family
by the action of the finite group $\Aut(X)$. Indeed, a quotient of a normal
space is normal, and the local ring of a complex algebraic variety is
normal if its corresponding analytic algebra is normal.

\medskip
Let now $X = \hat{X} / G$ be the canonical model of a Keum - Naie
surface. Note that
$$
\Ext^1(\Omega^1_X, \hol_X) = \Ext^1(\Omega^1_{\hat{X}}, \hol_{\hat{X}})^{G}
$$

and that

$$
\mathfrak B_X = \mathfrak B_{\hat{X}} \cap  \Ext^1(\Omega^1_X, \hol_X) = \mathfrak B_{\hat{X}} \cap  \Ext^1(\Omega^1_{\hat{X}}, \hol_{\hat{X}})^{G}. 
$$

In order  to conclude the proof, it suffices therefore to show that 
$$ \mathfrak B_{\hat{X}} =  \Ext^1(\Omega^1_{\hat{X}}, \hol_{\hat{X}}),$$
which shows that $ \mathfrak B_{\hat{X}} $ is smooth.

We consider then $\hat{X}$ as a double cover of its Albanese variety $A$, and observe that the family of
such double covers of a principally polarized Abelian surface has dimension equal to $18= 3 + 15$, since Abelian surfaces 
depend on three moduli, and the branch divisor $D$ varies in a linear system of projective dimension
$\frac{1}{2}  D^2 - 1= 16 - 1 = 15$ (observe that changing the divisor class to an algebraically equivalent one 
can be achieved by a translation, which does not change the isomorphism class of the double cover).

Hence we are done once we show that $\dim \Ext^1_{\hol_{\hat{X}}}(\Omega^1_{\hat{X}},\hol_{\hat{X}}) = 18$.
This is the content of the following proposition, where we split all the relevant cohomology groups
in eigenspaces for the action of the group $\ZZ / 2 \ZZ$ generated by the covering involution for
the Albanese morphism.

\begin{prop}
\begin{enumerate}
 \item $\dim \Ext^1_{\hol_{\hat{X}}}(\Omega^1_{\hat{X}},\hol_{\hat{X}})^+ = 18$; 
\item $\dim \Ext^1_{\hol_{\hat{X}}}(\Omega^1_{\hat{X}},\hol_{\hat{X}})^- = 0$;
\item $\dim \Ext^2_{\hol_{\hat{X}}}(\Omega^1_{\hat{X}},\hol_{\hat{X}})^+ = 2$;
\item $\dim \Ext^2_{\hol_{\hat{X}}}(\Omega^1_{\hat{X}},\hol_{\hat{X}})^- = 8$.
\end{enumerate}

\end{prop}

\begin{proof}
Consider the following exact sequence:

\begin{multline}\label{ext1seq}
   0  \rightarrow
\Hom_{\hol_{A}}(\Omega^1_A,\hol_A) \rightarrow 
H^0(\hol_D(D))  
\rightarrow 
\Ext^1_{\hol_{\hat{X}}}(\Omega^1_{\hat{X}},\hol_{\hat{X}})^+ \rightarrow \\
\rightarrow 
\Ext^1_{\hol_A}(\Omega^1_A,\hol_A) \rightarrow 
H^1(\hol_D(D)) 
\rightarrow 
\Ext^2_{\hol_{\hat{X}}}(\Omega^1_{\hat{X}},\hol_{\hat{X}})^+ \rightarrow \\
\rightarrow
\Ext^2_{\hol_A}(\Omega^1_A,\hol_A)  \rightarrow  0, 
\end{multline}

 for  which a convenient reference is \cite{manettiinv}, where  the following is proven:
 
 \begin{prop}
   For every locally simple normal flat $(\ZZ/2 \ZZ)^r$ - cover $f: X
\rightarrow Y$
there is a $(\ZZ/2 \ZZ)^r$ - equivariant exact sequence of sheaves
\begin{equation}\label{equivsequ}
   0 \rightarrow f^* \Omega_Y^1 \rightarrow \Omega^1_X \rightarrow
\bigoplus_{\sigma \in (\ZZ/2 \ZZ)^r} \hol_{R_{\sigma}} (- R_{\sigma}) 
\rightarrow 0,
\end{equation} where $R_{\sigma}$ is the divisorial part of $\Fix(\sigma)$.

 Moreover,  for each $\sigma \in (\ZZ/2 \ZZ)^r$ and $i \geq 1$, we have
$$
\Ext^i_{\hol_X} (\hol_{R_{\sigma}} (- R_{\sigma}),\hol_X) \cong
\bigoplus_{\{\chi |
\chi(\sigma) = 0\}} H^{i-1}(\hol_{D_{\sigma}} (D_{\sigma} - \mathcal
L_{\chi})).
$$
\end{prop}
Observe that

\begin{itemize}
\item $\Hom_{\hol_{A}}(\Omega^1_A,\hol_A) \cong \CC^2$,
\item $\Ext^1_{\hol_{A}}(\Omega^1_A,\hol_A) \cong \CC^4$,
\item $\Ext^2_{\hol_{A}}(\Omega^1_A,\hol_A) \cong \CC^2$,
\item $H^1(\hol_D(D)) (= H^1(\hol_D(K_D))) \cong \CC$,
\item $H^0(\hol_D(D))(= H^0(\hol_D(K_D))) \cong \CC^{17}$, since $D$ has genus $ g=17$ (in fact $2(g-1) = D^2 = 32$).
\end{itemize}

Note that the map $ \lambda : \Ext^1_{\hol_A}(\Omega^1_A,\hol_A) \rightarrow 
H^1(\hol_D(D))$ is the Serre dual of 
$$
\CC \cong H^0(D,\hol_D) \rightarrow H^1(A, \Omega^1_A), \ \ 1 \mapsto c_1(D),
$$
which is injective. Therefore $\lambda$ is surjective, and part 1) and 2) of the claim follow.

In order to calculate the antiinvariant parts of $\Ext^i_{\hol_{\hat{X}}}(\Omega^1_{\hat{X}},\hol_{\hat{X}})$, $i=1,2$, observe that 
$$
\Ext^i_{\hol_{\hat{X}}}(\Omega^1_{\hat{X}},\hol_{\hat{X}})^- \cong \Ext^i_{\hol_{A}}(\Omega^1_A,\hol_A (-L)).
$$

But $\Ext^i_{\hol_{A}}(\Omega^1_A,\hol_A (-L)) \cong H^i(A, \Theta_A(-L)) \cong H^i(A, \hol_A(-L))^{\oplus 2}
\cong (H^{2-i}(A, \hol_A(L))^{\oplus 2})^{\vee}$.
\end{proof}

\end{proof}

\section{The fundamental group of Keum - Naie surfaces}

In the previous sections we proved that Keum - Naie surfaces form
a normal unirational irreducible component of dimension 6 of the
Gieseker moduli space. In this section we shall prove that indeed they form a
connected component. More generally, we shall prove the following:

\begin{theo}\label{homotopy}
  Let $S$ be a smooth complex projective surface which
is homotopically equivalent to a Keum - Naie surface. Then $S$ is a Keum - Naie
surface.

\end{theo}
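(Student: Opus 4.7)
The strategy is to exploit the rich structure of $\pi_1(S') \cong \Ga$. The group $\Ga$ contains the canonical normal subgroup $\Lam := \Lam_1' \oplus \Lam_2' \cong \ZZ^4$ of index $4$, with quotient $G = (\ZZ/2\ZZ)^2$, so from $S'$ we obtain a distinguished étale $G$-cover $\hat{S'} \to S'$ with $\pi_1(\hat{S'}) \cong \ZZ^4$. The plan is to recognize $\hat{S'}$ as a double cover of a product of two elliptic curves, $G$-equivariantly branched along a $(4,4)$-divisor avoiding the fixed points of $\ga_1 \ga_2$; then $S' = \hat{S'}/G$ will automatically be a Keum--Naie surface.

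First I would record the numerical invariants of $\hat{S'}$: since $\hat{S'} \to S'$ is étale of degree $4$, $K_{\hat{S'}}^2 = 16$ and $\chi(\hol_{\hat{S'}}) = 4$; combined with $b_1(\hat{S'}) = 4$ (read off from $\pi_1$) and the Kähler hypothesis, this gives $q(\hat{S'}) = 2$ and $p_g(\hat{S'}) = 5$, matching the invariants of $\hat{S}$. Next I would show that the Albanese map $\hat{\alpha'} \colon \hat{S'} \to A := \Alb(\hat{S'})$ is surjective of degree $2$. Since $\pi_1(\hat{S'})$ is abelian, the classifying map $\hat{S'} \to K(\ZZ^4,1) = T^4$ is homotopic to $\hat{\alpha'}$ followed by the forgetful map to the underlying real torus of $A$, so its push-forward on $H_4$ is a homotopy invariant. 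Its value, computed on $\hat{S}$, equals $\pm 2$; this rules out a $1$-dimensional Albanese image for $\hat{S'}$ (which would give $0$) and forces $\deg \hat{\alpha'} = 2$.

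The $G$-action on $\hat{S'}$ descends, by functoriality of Alb, to an affine action on $A$ whose linear part is the conjugation action of $G = \Ga/\Lam$ on $\Lam$. Since $\ga_1$ acts on $\Lam$ with $(+1)$- and $(-1)$-eigenspaces both of rank $2$, and the complex structure on $A$ commutes with the holomorphic $G$-action, these eigenspaces are complex subtori: hence $A$ splits $G$-equivariantly as $E_1' \times E_2'$, with $\ga_1, \ga_2$ acting exactly as in Section~1. The double cover $\hat{\alpha'}$ is then given by a line bundle $L$ on $E_1' \times E_2'$ with $L^{\otimes 2} \equiv \hol(D)$, where $D$ is the branch divisor; from $K_{\hat{S'}}^2 = 2L^2 = 16$ together with the $(-1)^*$-invariance of $L$ on each factor forced by $G$-equivariance, I expect the bidegree of $L$ to come out as $(2,2)$ and, after translation by a $2$-torsion point, to coincide with $p_1^* \hol_{E_1'}([o_1]+[\frac{e_1}{2}]) \otimes p_2^* \hol_{E_2'}([o_2]+[\frac{e_2}{2}])$. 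The freeness of the $G$-action on $\hat{S'}$, guaranteed by $\pi_1(\hat{S'}) = \Lam$, translates via the lemma at the end of Section~1 into the condition $D \cap \Fix(\ga_1 \ga_2) = \emptyset$.

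The main obstacle is the last step: identifying the $G$-linearization of $L$ as the canonical one of Remark~\ref{twist}, not just $L$ up to isomorphism. This should be forced by comparing the induced linearization on $H^0(L^{\otimes 2})$ with the one dictated by the extension class of $\Ga$; this is the step where the full homotopy equivalence (rather than mere agreement of numerical invariants) genuinely enters. Once $L$ and its linearization are pinned down, the construction of Section~1 reconstitutes $S'$ as a Keum--Naie surface verbatim.
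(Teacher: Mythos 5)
Your overall strategy is the same as the paper's: pass to the \'etale $(\ZZ/2\ZZ)^2$-cover $\hat{S'}$ determined by $\Lambda_1'\oplus\Lambda_2'\triangleleft\Gamma$, prove $\deg\hat{\alpha'}=2$ by homotopy invariance (your $H_4$ push-forward argument is the paper's invariance of the index $[H^4(\hat{S'},\ZZ):\wedge^4H^1(\hat{S'},\ZZ)]$), split the Albanese torus $G$-equivariantly into $E_1'\times E_2'$ (you do this via the eigenspace decomposition of the conjugation action of $G$ on $\Lambda$; the paper via the two intermediate covers $S_1,S_2$ and lemma \ref{albsi} — these are equivalent), and then read off the branched double cover structure. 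However, there is a genuine gap at the double-cover step. You treat $\hat{\alpha'}$ as if it were a flat double cover attached to a line bundle $L$ and write $K_{\hat{S'}}^2=2L^2=16$. But $\hat{\alpha'}$ is only generically finite of degree $2$: one must take the Stein factorization $\hat{S'}\to Y\to E_1'\times E_2'$, and the equality $K_{\hat{S'}}^2=K_Y^2=2L^2$ holds exactly when the branch divisor has only negligible (non-essential) singularities, i.e.\ when $Y$ has at most rational double points. That is precisely what has to be proved, both to justify your numerical determination of $L$ and because the definition of a Keum--Naie surface requires $\hat{X}$ to have canonical singularities; so your step assumes the key conclusion.

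The paper closes this gap with a separate lemma using Horikawa's canonical resolution: with $K_{\hat{S'}}^2=16$, $\chi(\hol_{\hat{S'}})=4$ and $K_A=0$, the two formulas $16-t=2L^2-2\sum(\xi_i-1)^2$ and $4=\tfrac12 L^2-\tfrac12\sum\xi_i(\xi_i-1)$ (where $\xi_i=[m_i/2]$ and $t$ counts blow-ups on $\hat{S'}$) combine to $t=-2\sum(\xi_i-1)\le 0$, forcing $\xi_i=1$ and $t=0$; hence $Y$ has only R.D.P.'s and $L^2=8$. You should also note that $L^2=8$ allows type $(1,4)$ as well as $(2,2)$; your appeal to $(-1)^*$-invariance does not exclude $(1,4)$, since the linear parts of $\gamma_1,\gamma_2$ preserve any product bidegree — the exclusion uses that no divisor of that type can be invariant under the full group generated by $\gamma_1,\gamma_2$ (which involve the half-period translations). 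Once this is in place, the remaining issue you flag — pinning down the $G$-linearization as the canonical one of remark \ref{twist} — is comparatively minor and is handled, as in section 1, by the freeness of the $G$-action (the branch divisor must avoid $\Fix(\gamma_1\gamma_2)$ and $\gamma_1\gamma_2$ must act by $-1$ on the fibre there), after normalizing the degree-two bundles on each factor by a translation.
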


\noindent
Let $S$ be a smooth complex projective surface with $\pi_1(S) = \Gamma$
($\Ga$ being the fundamental group of a Keum-Naie surface).

Recall that $\gamma_i^2 = e_i$ for $i = 1,2$. Therefore $\Gamma = \langle
\gamma_1, e_1', \gamma_2, e_2' \rangle$ and recall that, as we
observed in section 1, we
have the exact sequence
$$
1 \rightarrow \ZZ^4 \cong \langle e_1, e_1', e_2, e_2' \rangle 
\rightarrow \Gamma \rightarrow (\ZZ / 2 \ZZ)^2 \rightarrow 1,
$$

where $\ga_1 \mapsto (1,0) $, $\ga_2 \mapsto (0,1) $.

We have set $\Lambda_i':= \ZZ e_i \oplus \ZZ e_i'$, so that
  $\pi_1 (E_1' \times E_2')
=
\Lambda_1' \oplus \Lambda_2'$. 

We define also the two lattices 
$\Lambda_i := \ZZ
\frac{e_i}{2} \oplus \ZZ e_i'$.

\begin{rema}
  1) $\Gamma$ acts as a group of affine transformations on
the lattice $\Lambda_1 \oplus
\Lambda_2$.

\noindent
2) We have an \'etale double cover
$E_i' = \CC / \Lambda_i' \rightarrow E_i := \CC / \Lambda_i$, which 
is the quotient
by a semiperiod of $E_i'$, namely $e_i/2$.
\end{rema}

$\Gamma$ has two subgroups of index two:
$$
\Gamma_1 := \langle \gamma_1, e_1', e_2, e_2' \rangle, \ \ 
\Gamma_2:=\langle e_1, e_1', \gamma_2, e_2' \rangle,
$$
corresponding to two \'etale covers of $S$: $S_i \rightarrow S$, for $i = 1,2$.

\begin{lemma}\label{albsi}
The Albanese variety of $S_i$ is $E_i$. In particular, $q(S_1) = q(S_2) = 1$.
\end{lemma}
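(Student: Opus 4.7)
The plan is to pass to the \'etale $(\ZZ/2\ZZ)^2$-cover $\hat S \to S$ corresponding to the normal subgroup $\Lam':=\Lam_1'\oplus\Lam_2' \triangleleft \Ga$, and then extract the Albanese of $S_i = \hat S/\langle g_i\rangle$ from the induced $G$-action on $\Alb(\hat S)$. Since $\pi_1(\hat S) = \Lam' \cong \ZZ^4$, one has $q(\hat S) = 2$ and $\Alb(\hat S)$ is an abelian surface with $H_1 = \Lam'$. The deck group $G = \Ga/\Lam'$ acts holomorphically on $\Alb(\hat S)$, inducing on $\Lam'$ the conjugation action coming from $\Ga$; computing this action directly shows that $g_1$ fixes $\Lam_1'$ and negates $\Lam_2'$, while $g_2$ does the reverse. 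Since the complex structure on $\Lam'\otimes\RR$ must commute with this involution, the two eigenspaces are complex subspaces, and $\Alb(\hat S) = E_1'\times E_2'$ splits as a product of elliptic curves $E_i' := (\Lam_i'\otimes\RR)/\Lam_i'$.

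Using the Albanese isomorphism $H^0(\hat S,\Omega^1_{\hat S}) = \CC\,dz_1\oplus\CC\,dz_2$ and the fact that only the linear part of $g_i$ acts on differentials, I read off $g_1^* dz_1 = dz_1$, $g_1^* dz_2 = -dz_2$, so that
$$H^0(S_1,\Omega^1_{S_1}) = H^0(\hat S,\Omega^1_{\hat S})^{\langle g_1\rangle} = \CC\,dz_1,$$
giving $q(S_1) = 1$; symmetrically $q(S_2) = 1$. To identify the Albanese variety itself I descend the projection $\hat S \to E_1'\times E_2' \to E_1'$ along the quotient by $\langle g_1\rangle$. By the eigenspace computation the action of $g_1$ on $E_1'$ has trivial linear part, hence is translation by some $2$-torsion point $\tau_1$, and lifting to $\CC^2$ and using the relation $\ga_1^2 = e_1$ pins down $\tau_1 = e_1/2 \in E_1'$. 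The descended map $S_1 \to E_1'/\langle e_1/2\rangle = E_1$ pulls back the generator of $H^0(E_1,\Omega^1_{E_1})$ to $dz_1$, which generates $H^0(S_1,\Omega^1_{S_1})$; hence it is the Albanese map of $S_1$ and $\Alb(S_1) = E_1$. The argument for $S_2$ is symmetric.

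The main obstacle is the first step: justifying the holomorphic product decomposition $\Alb(\hat S) = E_1'\times E_2'$ and the identification of each $g_i$ with the specific affine involution coming from $\Ga$. Since $S$ is only assumed homotopy equivalent to a Keum--Naie surface, this has to be deduced purely from compatibility of the $G$-action on $\hat S$ with the complex structure, combined with the affine relations in $\Ga$ (in particular $\ga_i^2 = e_i$), rather than from any a priori realization of $\hat S$ as a branched cover of a product of elliptic curves.
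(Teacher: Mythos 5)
Your argument is correct, but it runs along a genuinely different track from the paper's. The paper proves the lemma by pure combinatorial group theory: from the relations $\gamma_1 t_{e_2}\gamma_1^{-1}=t_{e_2}^{-1}$, $\gamma_1 t_{e_2'}\gamma_1^{-1}=t_{e_2'}^{-1}$, $\gamma_1 t_{e_1'}=t_{e_1'}\gamma_1$ it deduces $t_{e_2}^2,t_{e_2'}^2\in[\Gamma_1,\Gamma_1]$, checks that $\Gamma_1/2\langle e_2,e_2'\rangle$ is already abelian, and concludes $H_1(S_1,\ZZ)=\Gamma_1^{ab}=\Lambda_1\oplus(\ZZ/2\ZZ)^2$, whence $q(S_1)=1$ and the Albanese is the elliptic curve with period lattice $\Lambda_1=\ZZ\frac{e_1}{2}\oplus\ZZ e_1'$. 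You instead work on the cover $\hat S$: you split $\Alb(\hat S)=E_1'\times E_2'$ using the fact that the conjugation action of $g_i$ on $\Lambda'=\Lambda_1'\oplus\Lambda_2'$ is $(\pm1)$ on the two summands and that a holomorphic torus automorphism acts $\CC$-linearly on $H_1\otimes\RR$, then take $g_1$-invariant $1$-forms to get $q(S_1)=1$, and finally descend the projection to $E_1'$ using the relation $\gamma_1^2=e_1$ (lifted to the equivariance homomorphism $\Gamma\to\mathbb{A}(2,\CC)$) to see that $g_1$ acts on $E_1'$ as translation by $e_1/2$, so $S_1\to E_1'/\langle e_1/2\rangle=E_1$. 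What your route buys is that it establishes en passant exactly what the paper derives only in the paragraph after the lemma (that $\Alb(\hat S)$ is $E_1'\times E_2'$ and that $G$ acts by the expected affine involutions); what the paper's route buys is elementarity and the full torsion of $H_1(S_i,\ZZ)$, with no appeal to Hodge theory or to the structure of automorphisms of complex tori. One small point to tighten in your last step: an isomorphism on $H^0(\Omega^1)$ only shows that the induced map $\Alb(S_1)\to E_1$ is an isogeny; to get that it is an isomorphism, note that $\pi_1(S_1)=\Gamma_1\to\pi_1(E_1)=\Lambda_1$ sends $\gamma_1\mapsto \frac{e_1}{2}$ and $e_1'\mapsto e_1'$, hence is surjective, which is immediate from your descent construction.
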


\begin{proof}
  Denoting the translation by $e_i$ by $t_{e_i} \in \mathbb{A}(2, 
\CC)$ we see that
$$
\gamma_1 t_{e_2} = t_{e_2}^{-1} \gamma_1 ,\ \ \gamma_1 t_{e_2'} = t_{e_2'}^{-1}
\gamma_1, \ \ \gamma_1 t_{e_1'} = t_{e_1'} \gamma_1.
$$

This implies that $t_{e_2}^2, t_{e_2'}^2 \in [\Gamma_1, \Gamma_1]$,
  and we get a surjective homomorphism
$$
\Gamma_1' := \Gamma_1 / 2 \langle e_2, e_2' \rangle \cong \Gamma_1 / 2 \ZZ^2
\rightarrow \Gamma_1^{ab} = \Gamma_1 / [\Gamma_1, \Gamma_1].
$$
Since $\gamma_1$ and $e_1'$ commute, we have that $\Ga'_1$ is commutative,
hence
$$
\Gamma_1' \cong \langle \gamma_1, e_1' \rangle \oplus (\ZZ / 2 \ZZ )^2 \cong
  \ZZ
\frac{e_1}{2} \oplus \ZZ e_1'\oplus (\ZZ / 2 \ZZ )^2 = \Lambda_1 
\oplus (\ZZ / 2
\ZZ )^2.
$$
Since $\Gamma_1'$ is abelian  $\Gamma_1' = \Gamma_1^{ab} = H_1(S_1,
\ZZ )$. This implies that $\Alb(S_1) = \CC / \Lambda_1 = E_1$.

\noindent
The same calculation shows that $\Gamma_2^{ab} = H_1(S_2, \ZZ )
  = \Lambda_2 \oplus (\ZZ / 2 \ZZ)^2$, whence $\Alb(S_2) = \CC / \Lambda_2 =
E_2$.

\end{proof}

For the sake of completeness we prove the following 
\begin{lemma}
$H_1(S, \ZZ) = \Gamma^{ab} = \ZZ / 
4 \ZZ \oplus (\ZZ/2 \ZZ)^3$.
\end{lemma}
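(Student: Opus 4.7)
The plan is to compute $\Gamma^{ab}$ directly from the presentation of $\Gamma$ given in Section~1, and then invoke Hurewicz to identify $H_1(S,\ZZ)$ with $\pi_1(S)^{ab}=\Gamma^{ab}$. Since $\Gamma$ is generated by $\gamma_1,\gamma_2,e_1',e_2'$ (the elements $e_i$ being redundant because $\gamma_i^2=e_i$), the only work is to keep track of enough relations to pin down the isomorphism class of $\Gamma^{ab}$.

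First I would read off the needed relations from the explicit affine action. The translations $e_1,e_1',e_2,e_2'$ pairwise commute. Conjugation by $\gamma_1$ fixes $e_1,e_1'$ but inverts $e_2,e_2'$ (because $\gamma_1$ acts by $-1$ on the second coordinate), and symmetrically for $\gamma_2$. A direct computation on $(z_1,z_2)$ shows
\[
\gamma_1\gamma_2 \;=\; t_{e_1}\,t_{e_2}^{-1}\,\gamma_2\gamma_1.
\]
Passing to the abelianization, the first set of relations gives $2e_2=2e_2'=2e_1=2e_1'=0$, the mixed relation collapses to $e_1=e_2$, and the squaring relations become $2\gamma_1=e_1$, $2\gamma_2=e_2$.

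Next I would choose a convenient basis. Using $2\gamma_i=e_i$ to eliminate $e_1,e_2$, the group $\Gamma^{ab}$ is presented by generators $\gamma_1,\gamma_2,e_1',e_2'$ with relations $4\gamma_1=4\gamma_2=0$, $2\gamma_1=2\gamma_2$, $2e_1'=2e_2'=0$. Setting $\alpha:=\gamma_1$ and $\beta:=\gamma_1-\gamma_2$, the relation $2\gamma_1=2\gamma_2$ becomes $2\beta=0$, and the relation $4\gamma_2=0$ is redundant, so the presentation splits as $\langle\alpha\mid 4\alpha=0\rangle\oplus\langle\beta,e_1',e_2'\mid 2\beta=2e_1'=2e_2'=0\rangle\cong\ZZ/4\ZZ\oplus(\ZZ/2\ZZ)^3$.

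The one delicate point — and the main obstacle — is to check that no additional relations are hiding, i.e.\ that $\alpha=\gamma_1$ really has order $4$ rather than $2$. This amounts to showing $e_1\neq 0$ in $\Gamma^{ab}$, which follows from the exact sequence $1\to\ZZ^4\to\Gamma\to(\ZZ/2\ZZ)^2\to 1$ together with the computation in Lemma~\ref{albsi}: restricting to $\Gamma_1$ we already saw that $\Gamma_1^{ab}\cong\Lambda_1\oplus(\ZZ/2\ZZ)^2$, in which the image of $e_1=2\cdot\tfrac{e_1}{2}$ is nonzero of order $2$. Since $\Gamma_1\subset\Gamma$ has index $2$, the image of $e_1$ in $\Gamma^{ab}$ is also nonzero, so $\gamma_1$ has order exactly $4$. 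This confirms the isomorphism $H_1(S,\ZZ)=\Gamma^{ab}\cong\ZZ/4\ZZ\oplus(\ZZ/2\ZZ)^3$.
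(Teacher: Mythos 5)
Your computation of the relations (conjugation by $\gamma_i$ inverting the opposite translations, $\gamma_1\gamma_2=t_{e_1}t_{e_2}^{-1}\gamma_2\gamma_1$, and $\gamma_i^2=e_i$) and the resulting conclusion that $\Gamma^{ab}$ is a quotient of $\ZZ/4\ZZ\oplus(\ZZ/2\ZZ)^3$ agree with the paper, which packages the same relations into the surjections $\Gamma\ra\Gamma'\ra\Gamma''=\Gamma/\langle 2e_1,2e_1',2e_2,2e_2',e_1-e_2\rangle$. The gap is in the opposite direction, i.e.\ in ruling out further collapse. Your reduction of ``no hidden relations'' to the single statement ``$\gamma_1$ has order $4$'' is not justified: a priori additional relations could instead kill or identify $e_1'$, $e_2'$, or $\gamma_1-\gamma_2$, and checking the order of $\gamma_1$ would not detect this. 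What is needed is a surjection of $\Gamma$ (equivalently of $\Gamma^{ab}$, or of $\Gamma''$) onto all of $\ZZ/4\ZZ\oplus(\ZZ/2\ZZ)^3$; this is exactly what the paper supplies by exhibiting the explicit homomorphism $\psi$ with $\psi(\overline{\gamma}_1)=(1,0,0,0)$, $\psi(\overline{\gamma}_2)=(1,1,0,0)$, $\psi(\overline{e'}_1)=(0,0,1,0)$, $\psi(\overline{e'}_2)=(0,0,0,1)$ and checking it is well defined and bijective.

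Moreover, the argument you give even for that single statement is invalid. From $e_1\neq 0$ in $\Gamma_1^{ab}$ and $[\Gamma:\Gamma_1]=2$ you cannot conclude $e_1\neq 0$ in $\Gamma^{ab}$: the map $\Gamma_1^{ab}\ra\Gamma^{ab}$ induced by a finite-index inclusion need not be injective. A standard counterexample is the quaternion group $Q_8$ with its index-two subgroup $\langle i\rangle\cong\ZZ/4\ZZ$: the element $-1=i^2$ is nonzero in $\langle i\rangle^{ab}$, yet $-1=[j,k]$ dies in $Q_8^{ab}\cong(\ZZ/2\ZZ)^2$. (There is also a small slip: in $\Gamma_1^{ab}\cong\Lambda_1\oplus(\ZZ/2\ZZ)^2$ the class of $e_1=2\cdot\frac{e_1}{2}$ has infinite order, not order two.) To close the gap, follow the paper: verify directly that the assignment above defines a homomorphism from $\Gamma$ (it kills $2e_1,2e_1',2e_2,2e_2'$ and $e_1-e_2$, so descends to $\Gamma''$) onto a group of order $32$, which together with your upper bound gives $\Gamma^{ab}\cong\ZZ/4\ZZ\oplus(\ZZ/2\ZZ)^3$; the identification $H_1(S,\ZZ)=\pi_1(S)^{ab}=\Gamma^{ab}$ is then standard, as you say.
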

\begin{proof}
 We have already seen in the proof of lemma \ref{albsi} that
$$
\gamma_1 t_{e_2} = t_{e_2}^{-1} \gamma_1 ,\ \ \gamma_1 t_{e_2'} = t_{e_2'}^{-1}
\gamma_1;
$$
$$
\gamma_2 t_{e_1} = t_{e_1}^{-1} \gamma_2 ,\ \ \gamma_2 t_{e_1'} = t_{e_1'}^{-1}
\gamma_2,
$$
and moreover, for $i=1,2$, we have that $\gamma_i$ commutes with $e_i, e_i'$.

This shows that we have a surjective homomorphism
$$
\Gamma' := \Gamma / \langle 2e_1, 2e_1', 2e_2,2e_2' \rangle \cong \Gamma/ 2 \ZZ^2 \ra \Gamma / [ \Gamma, \Gamma].
$$
Since $\gamma_2 \gamma_1 = t_{e_2} t_{e_1}^{-1} \gamma_1 \gamma_2$, it follows that $e_2 - e_1 \in [\Gamma, \Gamma]$, whence we have a surjective homomorphism

$$
\Gamma'' := \Gamma' / \langle e_1 - e_2 \rangle  \ra \Gamma / [ \Gamma, \Gamma],
$$
and it is easy to see that the homomorphism $\psi: \Gamma'' \ra \ZZ / 4 \ZZ \oplus (\ZZ / 2 \ZZ)^3$, given by  
$$
\psi(\overline{\gamma}_1) = (1,0,0,0), \ \psi(\overline{\gamma}_2) = (1,1,0,0), 
$$
$$ \psi(\overline{e'}_1) = (0,0,1,0), \ \psi(\overline{e'}_2) = (0,0,0,1).
$$
is well defined and is an isomorphism. This shows the claim.

\end{proof}

Let $\hat{S} \rightarrow S$ be the \'etale $(\ZZ / 2 \ZZ)^2$ - 
covering associated to $\Lambda_1'
\oplus \Lambda_2'
  = \langle e_1, e_1', e_2, e_2' \rangle \triangleleft \Gamma$. Since
$\hat{S} \rightarrow S_i \rightarrow S$, and $S_i$ maps to $E_i$ (via 
the Albanese map), we get a
morphism
$$
f:\hat{S} \rightarrow E_1 \times E_2 = \CC/ \Lambda_1 \times \CC / \Lambda_2.
$$
Then $f$ factors through the Albanese map of $\hat{S}$: but, since 
the fundamental group of
$\hat{S}$ equals  $\Lambda_1'
\oplus
\Lambda_2' $, and the covering of
$E_1
\times E_2$ associated to
$\Lambda_1'
\oplus
\Lambda_2' \leq
\Lambda_1 \oplus \Lambda_2$ is $E_1' \times E_2'$,  we see that $f$ 
factors through $E_1' \times
E_2'$ and that the Albanese map of $\hat{S}$ is $\hat{\alpha} : 
\hat{S} \rightarrow E_1' \times E_2'$.

We will conclude the proof of theorem \ref{homotopy} with the following

\begin{prop}\label{dc}
Let $S$ be a smooth complex projective surface, which is 
homotopically equivalent to a
Keum - Naie surface. Let $\hat{S} \rightarrow S$ be the \'etale $(\ZZ 
/ 2 \ZZ)^2$ - cover associated to
$\langle e_1, e_1', e_2, e_2' \rangle \triangleleft \Gamma$ and let
\begin{equation*}\label{stein}
\xymatrix{
\hat{S} \ar[r]^{\hat{\alpha}}\ar[dr]&E_1' \times E_2'\\
&Y \ar[u]_{\varphi}\\
}
\end{equation*}
be the Stein factorization of the Albanese map of $\hat{S}$.

\noindent
Then $\varphi$ has degree $2$ and $Y$ is a canonical model of $\hat{S}$.
\end{prop}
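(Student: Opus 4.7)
The plan is to extract $\deg\varphi$ from a homotopy invariant of the cohomology ring of $\hat{S}$, and then to recognise $Y$ as the canonical model via the standard numerical comparison for a double cover of an abelian surface.

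First I would gather the numerical invariants of $\hat{S}$. Homotopy equivalence to a Keum--Naie surface preserves Betti numbers, Euler characteristic and signature, which together force $K_S^2=4$, $\chi(\hol_S)=1$, $q(S)=0$, $p_g(S)=0$, and make $S$ minimal of general type (a $(-1)$-curve would lower $K^2$ of the minimal model below $4$). For the \'etale $(\ZZ/2\ZZ)^2$-cover $\hat{S}$ one then has $K_{\hat{S}}^2=16$ and $\chi(\hol_{\hat{S}})=4$, and from $\pi_1(\hat{S})=\Lambda_1'\oplus\Lambda_2'\cong\ZZ^4$ one reads $q(\hat{S})=2$, hence $p_g(\hat{S})=5$. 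Moreover $\hat{S}$ is itself minimal of general type, and $A:=\Alb(\hat{S})$ is a $2$-dimensional abelian variety.

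The central step is to prove $\deg\hat{\alpha}=2$. Cup product defines a map
\[
c_{\hat{S}}\colon\Lambda^4 H^1(\hat{S},\ZZ)\longrightarrow H^4(\hat{S},\ZZ)
\]
between two free abelian groups of rank one, and is therefore multiplication by a non-negative integer $d_\Lambda(\hat{S})$ depending only on the cohomology ring, hence a homotopy invariant. On the abelian surface $A$ the analogous map is an isomorphism, $\hat{\alpha}^{\ast}$ is an isomorphism on $H^1$ by the universal property of the Albanese, and $\hat{\alpha}^{\ast}$ on $H^4$ is multiplication by $\deg\hat{\alpha}$ whenever $\hat{\alpha}$ is generically finite. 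Naturality of cup product thus gives $d_\Lambda(\hat{S})=\deg\hat{\alpha}$ when the Albanese image is $2$-dimensional, and $d_\Lambda(\hat{S})=0$ if the image is a curve $C$ (every $H^1$-class of $\hat{S}$ would then be pulled back from the smooth model $\tilde{C}$ via the Stein factorization $\hat{S}\to\tilde{C}$, and the quadruple cup product would factor through $H^4(\tilde{C})=0$). A direct computation on the Keum--Naie surface, built as a double cover of an abelian surface, gives $d_\Lambda=2$; by homotopy invariance the same holds for $\hat{S}$. Hence $\hat{\alpha}$ is generically finite, surjects onto $A$, and $\deg\hat{\alpha}=2=\deg\varphi$.

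To identify $Y$ as the canonical model of $\hat{S}$, observe that the degree $2$ cover $\varphi$ gives $\varphi_{\ast}\hol_Y=\hol_A\oplus L^{-1}$ for a line bundle $L$ on $A$ with branch in $|2L|$, so $K_Y=\varphi^{\ast}L$ and $K_Y^2=2L^2$. Since every $(-2)$-curve of $\hat{S}$ is rational, hence collapsed to a point of $A$ (which admits no non-constant map from $\PP^1$), the canonical model $X$ of $\hat{S}$ dominates $Y$, giving $K_Y^2\ge K_X^2=K_{\hat{S}}^2=16$, i.e.\ $L^2\ge 8$. Conversely $p_g(Y)\le p_g(X)=5$ yields $h^0(L)\le 4$, while on an abelian surface an effective line bundle with $L^2>0$ satisfies $h^0(L)=L^2/2$. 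Combining, $L^2=8$ and $K_Y^2=16=K_{\hat{S}}^2$, so the discrepancy of $\hat{S}\to Y$ vanishes, $Y$ has only rational double points, and $Y$ is the canonical model of $\hat{S}$. The main obstacle is the cohomological identification $d_\Lambda(\hat{S})=\deg\hat{\alpha}$, which requires matching complex orientations and exploiting that on an abelian variety $H^{\ast}$ is the full exterior algebra on $H^1$ so that $\hat{\alpha}^{\ast}$ on the top cohomology is controlled by its (invertible) action on $H^1$; once this is in place, excluding a $1$-dimensional Albanese image and the numerical recognition of $Y$ as the canonical model are essentially formal.
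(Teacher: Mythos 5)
Your first step is essentially the paper's own argument: the degree of $\hat{\alpha}$ is read off from the homotopy-invariant index $[H^4(\hat{S},\ZZ):\wedge^4 H^1(\hat{S},\ZZ)]$, which equals $2$ on the model Keum--Naie cover, and a one-dimensional Albanese image is excluded because that index would then vanish. This part is fine.

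The identification of $Y$ with the canonical model, however, has a genuine gap. You need the upper bound $L^2\le 8$, and you derive it from ``$p_g(Y)\le p_g(X)=5$''. For a proper birational morphism $\rho\colon \hat{S}\to Y$ onto a normal Gorenstein surface the natural inclusion goes the other way: $\rho_*\omega_{\hat{S}}\subseteq \omega_Y$, hence $h^0(\omega_Y)\ge p_g(\hat{S})=5$, and the difference $h^0(\omega_Y)-p_g(\hat{S})$ is precisely the sum of the geometric genera of the singular points of $Y$, vanishing if and only if those singularities are rational. Since $h^0(\omega_Y)=h^0(L)+1$ by duality for the flat double cover $\varphi$, your inequality $h^0(L)\le 4$ is equivalent to the rationality of $\Sing(Y)$, i.e.\ to the statement being proved; all the comparisons available to you ($K_Y^2\ge K_{\hat{S}}^2$, $\chi(\hol_Y)\ge\chi(\hol_{\hat{S}})$, $h^0(\omega_Y)\ge p_g(\hat{S})$) bound $L^2$ only from below, because singularities of $Y$ can only push these invariants up. The paper gets the missing information not from a numerical comparison of $\hat{S}$ with $Y$ but from Horikawa's canonical resolution of the double cover: the two formulas for $K^2_{S^*}$ and $\chi(S^*)$ combine to $t=-2\sum(\xi_i-1)$, and $t\ge 0$, $\xi_i\ge 1$ force $t=0$ and all $\xi_i=1$, i.e.\ $Y$ has only rational double points. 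Some analysis of this kind of the branch curve is unavoidable. Two smaller points: your parenthetical justification of minimality is backwards (contracting a $(-1)$-curve raises $K^2$ of the minimal model above $4$, it does not lower it), and minimality of $\hat{S}$ --- which you use to write $K_X^2=K_{\hat{S}}^2=16$ --- is nowhere established; note that the paper's lemma needs only $K_{\hat{S}}^2=16$, $\chi(\hat{S})=4$ and the degree-$2$ map, so minimality is a conclusion rather than a hypothesis there.
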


\begin{cor}
  $Y$ is a finite double cover of $E_1' \times E_2'$ branched on a 
divisor of type $(4,4)$.
\end{cor}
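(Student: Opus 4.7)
\medskip
\noindent\textbf{Proof plan.}
By Proposition~\ref{dc}, $\varphi : Y \to E_1' \times E_2'$ is a finite morphism of degree $2$ from a normal variety, hence a flat double cover: $\varphi_* \hol_Y \cong \hol_{E_1' \times E_2'} \oplus L^{-1}$ for some line bundle $L$ on $E_1' \times E_2'$, with branch divisor $B \equiv 2L$. The plan is to identify the bidegree of $L$ relative to the two projections and conclude that $B$ is of type $(4,4)$.

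First I compute $L^2$. Since $Y$ is the canonical model of $\hat S$ and $\hat S \to S$ is \'etale of degree $4$, $K_Y^2 = K_{\hat S}^2 = 4K_S^2 = 16$; the equality $K_S^2=4$ is a homotopy invariant via the signature formula $K^2 = 3\sigma + 2\chi_{\mathrm{top}}$. The double cover formula $K_Y \equiv \varphi^* L$ yields $K_Y^2 = 2L^2$, so $L^2 = 8$. Set $d_1 := \deg(L|_{E_1' \times \{0\}})$ and $d_2 := \deg(L|_{\{0\} \times E_2'})$; numerically $L = d_1 f_2 + d_2 f_1 + C$, where $f_i$ is the fiber of the $i$-th projection and $C$ is orthogonal to the $f_i$. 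Hodge index gives $C^2 \leq 0$, hence $L^2 \leq 2 d_1 d_2$, i.e., $d_1 d_2 \geq 4$.

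Next I invoke the $G = (\ZZ/2\ZZ)^2$-equivariance of $\varphi$. The action of $G = \Gamma/(\Lambda_1' \oplus \Lambda_2')$ on $\hat S$ descends to the canonical model $Y$, and because the Albanese map is canonical, $\varphi$ is equivariant for the $(\gamma_1, \gamma_2)$-action on $E_1' \times E_2' = \Alb(\hat S)$. Hence $L$ is $G$-linearized. The element $\gamma_1$ preserves the fiber $E_1' \times \{0\}$ (acting as translation by $e_1/2$ on the $E_1'$-factor while fixing $0 = -0 \in E_2'$), so $L|_{E_1' \times \{0\}}$, of degree $d_1$, must be invariant under translation by $e_1/2$ on $E_1'$. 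Since a line bundle of degree $d$ on an elliptic curve is invariant under translation by $\eta \in E[2]\setminus\{0\}$ if and only if $d\eta = 0$, this forces $d_1$ to be even, and symmetrically $d_2$ is even. Combined with $d_1 d_2 \geq 4$, the expected solution $(d_1, d_2) = (2,2)$ (with $C=0$) gives that $B \equiv 2L$ is of type $(4,4)$.

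The main obstacle will be ruling out larger even bidegrees $(d_1, d_2)$ such as $(2,4)$ absorbed by a negative-self-intersection correspondence component $C$. The cleanest way to close this gap is to identify the generic fiber of the composition $\hat S \to E_1'$ as a double cover of $E_2'$ branched in $B \cdot f_1 = 2d_2$ points, hence of arithmetic genus $1 + d_2$; homotopy invariance compared against the Keum-Naie model, where this fiber has genus $3$, forces $d_2 = 2$, and analogously $d_1 = 2$. Alternatively, a direct case analysis (trivial when $E_1'$ and $E_2'$ are not isogenous, and a short computation otherwise) shows that any $G$-equivariant correspondence class on $E_1' \times E_2'$ must vanish, eliminating $C$ at the source.
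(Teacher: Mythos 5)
Your overall strategy is the same as the paper's: write $B\equiv 2L$, establish $L^{2}=8$, and use $G$-invariance to pin down the numerical class of $L$ (the paper does this in the remark following the last lemma: the Horikawa equations give $\mathcal L^{2}=8$, hence polarization type $(1,4)$ or $(2,2)$, and $(1,4)$ is excluded by invariance). But as written your argument stops exactly at the decisive point, which you yourself flag: Hodge index plus the parity of $d_{1},d_{2}$ only give $d_{1}d_{2}\ge 4$, and nothing yet excludes, say, $(d_{1},d_{2})=(2,4)$ compensated by a correspondence class $C$ with $C^{2}<0$. Of your two suggested completions, the first does not work as stated: the genus of a fibre of $\hat{S}\ra E_1'$ is not a homotopy invariant (it is not read off from the cohomology ring; converting it into $K_{\hat{S}}\cdot F$ would require knowing the canonical class, which homotopy equivalence does not provide), so ``homotopy invariance compared against the Keum--Naie model'' is not a legitimate step. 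The second completion is the right one and is genuinely short, but you only assert it: $\gamma_1$ acts trivially on $H^{1}(E_1',\ZZ)$ (it is a translation there) and by $-1$ on $H^{1}(E_2',\ZZ)$, hence by $-1$ on the summand $H^{1}(E_1')\otimes H^{1}(E_2')$ of $H^{2}(E_1'\times E_2')$; since $\gamma_1^{*}L\cong L$, the class of $L$ has vanishing correspondence part, so $C=0$, $2d_{1}d_{2}=L^{2}=8$, and your parity argument then forces $(d_1,d_2)=(2,2)$, i.e.\ $B$ of type $(4,4)$. Since this is precisely the content of the corollary (and also what makes the paper's terse remark rigorous in the isogenous case), leaving it as ``a short computation'' is a genuine gap.

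Two further points. First, your derivation of $L^{2}=8$ via $K_Y^{2}=K_{\hat{S}}^{2}$ presupposes that $\hat{S}$ is minimal: proposition \ref{dc} gives that $Y$ is a canonical model of $\hat{S}$, so a priori $K_Y^{2}\ge K_{\hat{S}}^{2}=16$, and the inequality $L^{2}\ge 8$ is not enough for your count. It is cleaner (and closer to what the lemma in the paper actually delivers, namely $t=0$, all $\xi_i=1$, hence $\mathcal L^{2}=8$) to use $\chi(\hol_Y)=\chi(\hol_{\hat{S}})=4\chi(\hol_S)=4$ (rational double points are rational singularities) together with the flat double cover formula $\chi(\hol_Y)=2\chi(\hol_A)+\frac12 L\cdot(L+K_A)=\frac12 L^{2}$. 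Second, your parity step needs $\gamma_i^{*}L\cong L$ as line bundles, not merely numerically; this is true because $L^{-1}$ is the anti-invariant eigensheaf of $\varphi_*\hol_Y$ for the covering involution, which commutes with the $G$-action descended from $\hat{S}$, but it deserves a sentence rather than the bare claim that $L$ is $G$-linearized. With these repairs your route becomes a correct, somewhat more explicit version of the paper's argument.
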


This completes the proof of theorem \ref{homotopy}.
\begin{proof}{of prop. \ref{dc}.} Consider the Albanese map
$\hat{\alpha} : \hat{S} \rightarrow E_1' \times E_2'$. Then we calculate
the degree of the Albanese map as the index of a certain subgroup of $H^4(\hat{S}, \ZZ)$,
namely:
$$
\deg (\hat{\alpha}) = [H^4(\hat{S}, \ZZ) : \hat{\alpha}^*H^4(E_1' 
\times E_2', \ZZ)
= \wedge ^4
\hat{\alpha}^*H^1(E_1' \times E_2', \ZZ)] =$$
$$  = [H^4(\hat{S}, \ZZ) : \wedge ^4 H^1(\hat{S}, \ZZ)].
$$
  But, since $S$ is homotopically equivalent to a Keum - Naie
surface $S'$, also $\hat{S}$ is homotopically equivalent to the 
\'etale $(\ZZ / 2 \ZZ)^2$ - covering
$\hat{S}'$ of $S'$. Since the 
  $[H^4(\hat{S}, \ZZ) : \wedge ^4 H^1(\hat{S}, \ZZ)]$ is a homotopy invariant,
and the degree of the Albanese map of $\hat{S}'$ is two, it follows 
that $\deg (\hat{\alpha}) = 2$.

\noindent
It remains to show that $Y$ has only rational double points. This 
follows from the following lemma.

\end{proof}

\begin{lemma}
  Let $A$ be an abelian surface and let $\hat{S}$ be a surface with 
$K_{\hat{S}}^2 = 16$
and $\chi(\hat{S}) = 4$. Moreover, let $\varphi : \hat{S} \rightarrow 
A$ be a generically finite morphism
of degree $2$. Then the branch divisor of $\varphi$ has only non 
essential singularities (i.e., the local
multiplicities of the singular points are $ \leq 3$, and for each 
infinitely near point we have multiplicity
at most two, cf. \cite{hor});  equivalently, if
\begin{equation*}
\xymatrix{
\hat{S} \ar[r]^{\varphi}\ar[dr]&A\\
&Y \ar[u]_{\delta}\\
}
\end{equation*}
is the Stein factorization, then $Y$ has at most rational double 
points as singularities.

\end{lemma}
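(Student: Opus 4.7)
The plan is to invoke Horikawa's canonical resolution of the double cover and compare its numerical invariants against those of $\hat S$. Let $B \subset A$ be the branch divisor of the finite double cover $Y \to A$ coming from the Stein factorization of $\varphi$, and write $B \equiv 2L$ for some $L \in \mathrm{Pic}(A)$. I will resolve the singularities of $B$ by a sequence of blow-ups of $A$; at the $i$-th blow-up the (strict transform of the) branch divisor has some multiplicity $m_i \ge 2$ at the center, and I set $\epsilon_i := \lfloor m_i / 2 \rfloor$. A singular point of $B$ is non-essential precisely when every $\epsilon_i$ arising in its resolution equals $1$; equivalently, the associated singularity of $Y$ is a rational double point.

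Next I will pass to the smooth canonical resolution $S^{\mathrm{can}} \to \tilde A$ of $Y$. A direct per-blow-up computation of $(K_{\tilde A}+L')^2$ and of $\chi$ by Riemann--Roch, exploiting $K_A \equiv 0$ and $\chi(\hol_A)=0$, yields Horikawa's formulas
$$K^2_{S^{\mathrm{can}}} = 2L^2 - 2\sum_i (\epsilon_i - 1)^2, \qquad \chi(\hol_{S^{\mathrm{can}}}) = \tfrac{1}{2} L^2 - \tfrac{1}{2}\sum_i \epsilon_i(\epsilon_i - 1).$$
Since $\hat S$ and $S^{\mathrm{can}}$ are smooth surfaces birational to each other (both resolve $Y$), $\chi(\hol)$ agrees on them, and the hypothesis $\chi(\hat S) = 4$ gives
$$L^2 \;=\; 8 + \sum_i \epsilon_i(\epsilon_i - 1).$$

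Now I use that in the intended application $\hat S$ is minimal (as an étale cover of a minimal surface of general type), so it is the minimal model of $S^{\mathrm{can}}$ and $K^2_{\hat S} \ge K^2_{S^{\mathrm{can}}}$. Plugging in $K^2_{\hat S} = 16$ and the expression for $L^2$:
$$16 \;\ge\; 2L^2 - 2\sum_i (\epsilon_i - 1)^2 \;=\; 16 + 2\sum_i \bigl[\epsilon_i(\epsilon_i - 1) - (\epsilon_i - 1)^2\bigr] \;=\; 16 + 2\sum_i (\epsilon_i - 1).$$
Therefore $\sum_i (\epsilon_i - 1) \le 0$; since each $\epsilon_i \ge 1$, every $\epsilon_i$ must equal $1$. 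This means $B$ has only non-essential singularities and $Y$ has at most rational double points, as required.

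The main obstacle is the bookkeeping for Horikawa's formulas — in particular verifying the per-blow-up contribution $-(\epsilon_i-1)^2$ to $(K+L)^2$ and $-\tfrac{1}{2}\epsilon_i(\epsilon_i-1)$ to $\chi$ — but this is a classical computation. A minor point is the comparison $K^2_{\hat S} \ge K^2_{S^{\mathrm{can}}}$, which relies on minimality of $\hat S$; in the setting of Proposition~\ref{dc} this is automatic because $\hat S$ is étale over a minimal model of general type.
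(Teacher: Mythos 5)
Your strategy is the paper's: Horikawa's canonical resolution, the two formulas for $K^2$ and $\chi$ specialized to $K_A\equiv 0$, $\chi(\hol_A)=0$, and the arithmetic that forces every $\epsilon_i$ to be $1$. The one step where you deviate is the justification of $K^2_{\hat S}\ \ge\ K^2_{S^{\mathrm{can}}}$, and that is a genuine gap. The lemma has no minimality hypothesis on $\hat S$, and your claim that minimality is ``automatic in the intended application'' is circular: in Proposition \ref{dc} the surface $S$ is only assumed to be homotopically equivalent to a Keum--Naie surface. A homotopy equivalence pins down $e(S)$, $q(S)=p_g(S)=0$, hence $K^2_S=4$ by Noether's formula, but it does not tell you that $S$ (hence $\hat S$) is minimal --- minimality of $S$ is part of what Theorem \ref{homotopy} ultimately establishes, so you may not import it here. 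And without minimality the inequality is unsupported: between two arbitrary smooth models of a surface of general type there is no inequality of $K^2$'s, so knowing only that $\hat S$ and $S^{\mathrm{can}}$ are birational gives you nothing in the direction you need.

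The repair is exactly the ingredient the paper uses, and it requires no minimality at all: Horikawa's construction produces not merely some resolution of $Y$, but a birational morphism $\sigma : S^{\mathrm{can}} \rightarrow \hat S$ which is a composition of $t\ge 0$ blow-ups at smooth points of $\hat S$, whence $K^2_{S^{\mathrm{can}}} = K^2_{\hat S} - t$. Feeding this equality (rather than an inequality) into your computation yields $t = -2\sum_i(\epsilon_i-1)$, which forces simultaneously $t=0$ and $\epsilon_i=1$ for all $i$; the extra conclusion $t=0$, i.e.\ $\hat S = S^{\mathrm{can}}$, is moreover what one wants afterwards to identify $Y$ with the canonical model of $\hat S$ in Proposition \ref{dc}. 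Everything else in your write-up --- the per-blow-up bookkeeping, the birational invariance of $\chi(\hol)$, and the final arithmetic --- is correct and coincides with the paper's argument.
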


\begin{proof}
  We use the notation and results on double covers due to E. Horikawa 
(cf. \cite{hor}).
Consider the following diagram:
\begin{equation*}
\xymatrix{
\hat{S} \ar[r]^{\varphi}&A\\
S^* \ar[r] \ar[u]_{\sigma}&\tilde{A} \ar[u]_{\delta},\\
}
\end{equation*}
where $S^* \rightarrow A$ is the so-called {\em canonical
resolution} in the terminology of Horikawa.

This means
that $\tilde{A} \rightarrow A$ is a minimal sequence
of blow ups such that the reduced transform of the branch divisor of
$\varphi$ is smooth, so that
$S^*
\rightarrow \tilde{A}$ is a finite double cover with $S^*$ smooth, and
$S^* \rightarrow \hat{S}$ is a
sequence of blow ups of smooth points.

Then we have the following formulae:

\begin{equation}
  K_{S^*}^2 = K_{\hat{S}}^2 - t = 2(K_A + \mathcal{L})^2 -2\sum 
([\frac{m_i}{2}] -1)^2,
\end{equation}

\begin{equation}
  \chi(S^*) = \chi(\hat{S}) = \frac{1}{2}\mathcal{L}(K_A + 
\mathcal{L}) - \frac {1}
{2}\sum [\frac{m_i}{2}]([\frac{m_i}{2}] -1),
\end{equation}
where $t$ is the number of points on $\hat{S}$ blown up by $\sigma$, 
$\hol_A(2 \mathcal{L})
  \cong \hol_A(B)$, where $B$ is the branch divisor of the (singular) 
double cover $Y \rightarrow A$.
Finally $m_i \geq 2$ is the multiplicity of the branch curve in the 
$i$-th center of the successive blow up
of $A$. For details we refer to \cite{hor}.

Notice that $Y$ has R.D.P.s if and only if $\xi_i := [\frac{m_i}{2}] 
= 1$ for each singular point
(and for all infinitely near points).

In our situation, the above two equations read:
$$
K_{S^*}^2 = 16 -t = 2\mathcal{L}^2 - 2 \sum (\xi_i - 1)^2;
$$
$$
\chi(\hat{S}) = 4 = \frac{1}{2} \mathcal{L}^2 -\frac {1}{2}\sum 
\xi_i(\xi_i -1).
$$
This implies that

$$
  2\mathcal{L}^2- 2 \sum (\xi_i - 1)^2 +t = 16 = 2 \mathcal{L}^2 
-2\sum \xi_i(\xi_i -1),
$$

or, equivalently,
$$  t = - 2 \sum (\xi_i -1).$$

Since $ \xi_i \geq 1$ this is only possible iff $\xi_i = 1$ for 
all $i$ and $t=0$.

\end{proof}

\begin{rema}
  Note that the above equations also imply that in the case $A = E_1' 
\times E_2'$,
$\mathcal{L}$ has to be of type $(2,2)$ or $(1,4)$ (resp. $(4,1)$). But
a divisor of type $(1,4)$ cannot be
  $(\ZZ / 2
\ZZ)^2$  invariant. This proves the above corollary.
\end{rema}

In fact, we conjecture the following to hold true:
\begin{conj}\label{conj1}
  Let $S$ be a minimal smooth projective surface such that
\begin{itemize}
  \item[i)] $K_S^2 = 4$,
\item[ii)] $\pi_1(S) \cong \Gamma$.
\end{itemize}
Then $S$ is a Keum - Naie surface.
\end{conj}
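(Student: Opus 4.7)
The plan is to extract from $\pi_1(S) \cong \Gamma$ and $K_S^2 = 4$ the same numerical and geometric data as in the Keum--Naie construction, and then run the argument of Theorem \ref{homotopy} on the \'etale $(\ZZ/2\ZZ)^2$-cover $\hat S \to S$. The key substitute for the homotopy hypothesis will be the Severi inequality of Pardini together with the characterization of its equality case.

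First, since $\Gamma^{ab} \cong \ZZ/4\ZZ \oplus (\ZZ/2\ZZ)^3$ is finite, one has $b_1(S) = 0$, hence $q(S) = 0$. No minimal ruled surface has $K^2 = 4$ (the values are $9$, $8$, or $\le 0$), and any minimal surface with $0 \le \kappa \le 1$ satisfies $K^2 = 0$, so $K_S^2 = 4$ forces $S$ to be of general type.

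Next, let $\hat S \to S$ be the \'etale cover of degree $4$ associated to the translation lattice $\ZZ^4 = \langle e_1,e_1',e_2,e_2'\rangle$, which is characteristic in the crystallographic group $\Gamma$. Then $K_{\hat S}^2 = 16$, $\pi_1(\hat S) = \ZZ^4$ and $q(\hat S) = 2$. Applying Pardini's Severi inequality $K^2 \ge 4\chi$ for general type surfaces with $q \ge 2$ gives $\chi(\hat S) \le 4$, so $\chi(S) \le 1$, whence $p_g(S) = 0$ and $\chi(\hat S) = 4$. Equality thus holds in Severi's inequality for $\hat S$, and by the characterization of the equality case (Pardini, refined by Barja) the Albanese morphism $\hat\alpha : \hat S \to \Alb(\hat S)$ has degree exactly $2$ onto a $2$-dimensional abelian variety.

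From this point the proof of Theorem \ref{homotopy} carries over: the $(\ZZ/2\ZZ)^2 = \Gamma/\ZZ^4$ action on $\hat S$ descends to $\Alb(\hat S)$ with linear parts $\operatorname{diag}(1,-1)$ and $\operatorname{diag}(-1,1)$ read off from the affine representation of $\Gamma$, forcing $\Alb(\hat S) \cong E_1' \times E_2'$ as a product of elliptic curves in an equivariant manner. The lemma on degree-$2$ morphisms onto abelian surfaces with $K^2 = 16$ and $\chi = 4$, together with the remark ruling out bidegree $(1,4)$ by $(\ZZ/2\ZZ)^2$-invariance, then shows that $\hat S$ is the required double cover of $E_1' \times E_2'$ branched on an invariant $(4,4)$-divisor, so $S$ is a Keum--Naie surface. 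The principal obstacle is the equality case of Pardini's inequality: short of it one only obtains $\deg \hat\alpha \le 2$ and must separately exclude birationality of $\hat\alpha$, which is precisely where the homotopy hypothesis in Theorem \ref{homotopy} did the bookkeeping via the index $[H^4(\hat S,\ZZ):\wedge^4 H^1(\hat S,\ZZ)]$.
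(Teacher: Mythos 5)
The statement you set out to prove is stated in the paper as Conjecture \ref{conj1}, not as a theorem, and the reason is exactly the step you treat as known. Your argument reproduces the paper's proof of Theorem \ref{Kample}: pass to the \'etale $(\ZZ/2\ZZ)^2$-cover $\hat S$, get $K_{\hat S}^2=16$ and $q(\hat S)=2$, use Severi's inequality to force $\chi(\hat S)=4$, and then invoke a characterization of the equality case $K^2=4\chi$ to conclude that $\hat\alpha$ has degree $2$ onto an abelian surface, after which the double-cover lemma and the $(\ZZ/2\ZZ)^2$-invariance argument finish as in Theorem \ref{homotopy}. But Pardini's proof of the Severi inequality (Theorem \ref{sevconj}) gives no control of the boundary case, and the only equality-case statement available to the paper is Manetti's Theorem \ref{SevMan}, which requires $K_S$ ample. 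This is precisely why the paper inserts hypothesis (iii) in Theorem \ref{Kample} (a deformation with ample canonical bundle, so that Manetti's theorem applies after specialization) and why it records the unconditional equality-case characterization as a separate open conjecture at the end of Section 3, noting that it would imply Conjecture \ref{conj1}. Your appeal to ``the characterization of the equality case (Pardini, refined by Barja)'' is therefore not a reduction but an assumption of the open statement that constitutes the entire difficulty; the fact that surfaces on the Severi line $K^2=4\chi$ of maximal Albanese dimension have degree-$2$ Albanese map was only established years after this paper (Barja--Pardini--Stoppino), and cannot be cited as known here.

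Two smaller points. First, Severi's inequality is not a statement about ``general type surfaces with $q\ge 2$'': it requires maximal Albanese dimension, so before applying it you must exclude that the Albanese image of $\hat S$ is a curve; the paper does this by observing that the image would be a curve of genus $2$, and $\pi_1(\hat S)\cong\ZZ^4$ is abelian, hence admits no surjection onto the non-abelian group $\pi_1(C)$. Second, your assertion that the $G$-action ``forces $\Alb(\hat S)\cong E_1'\times E_2'$ as a product of elliptic curves'' should be argued as in Section 3, via the two index-two subgroups $\Gamma_1,\Gamma_2$ and Lemma \ref{albsi}, rather than read off from the affine model of $\Gamma$, since the hypothesis is only an abstract isomorphism $\pi_1(S)\cong\Gamma$ and does not a priori fix the complex structure of the Albanese variety. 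These are repairable; the reliance on the unproved equality case of the Severi inequality is not, and with the tools of the paper your argument proves only Theorem \ref{Kample}, not Conjecture \ref{conj1}.
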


In fact, we can prove
\begin{theo}\label{Kample}
  Let $S$ be a minimal smooth projective surface such that
\begin{itemize}
  \item[i)] $K_S^2 = 4$,
\item[ii)] $\pi_1(S) \cong \Gamma$,
\item[iii)] there is a deformation of $S$ having ample canonical bundle.
\end{itemize}
Then $S$ is a Keum - Naie surface.
\end{theo}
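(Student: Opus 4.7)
The plan is to reduce the theorem to the hypotheses of Proposition~\ref{dc} and its accompanying lemma: what that proof extracted from homotopy invariance (namely that the Albanese map of the étale $(\ZZ/2\ZZ)^2$-cover of $S$ has degree $2$) must now be obtained from the numerical and fundamental-group data together with the $K$-ample deformation hypothesis, via Severi's inequality.

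First I would use Theorems~\ref{main} and~\ref{locmod} together with the deformation hypothesis to reduce to the case where $K_S$ is ample. Indeed, the Keum-Naie surfaces form an open and closed component of $\mathfrak{M}^{can}_{1,4}$, and a flat family joining $S$ to a $K$-ample deformation $S'$ induces a continuous map to this moduli space; so if $S'$ is Keum-Naie, so is $S$. With $K_S$ ample, $S$ is its own canonical model, and the subgroup $\Lambda_1' \oplus \Lambda_2' \triangleleft \Gamma$ of index $4$ determines an étale Galois $(\ZZ/2\ZZ)^2$-cover $\hat S \to S$ with $\pi_1(\hat S) = \ZZ^4$, $q(\hat S) = 2$, $K_{\hat S}^2 = 16$, and $K_{\hat S}$ ample. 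Since $\pi_1(\hat S)$ is abelian and $\dim \Alb(\hat S) = 2$, the Albanese map $\hat\alpha\colon \hat S \to A := \Alb(\hat S)$ is surjective onto an abelian surface and $\hat S$ has maximal Albanese dimension. Also, since $\Gamma^{ab} = \ZZ/4\ZZ \oplus (\ZZ/2\ZZ)^3$ is torsion, $q(S) = 0$.

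The numerical core of the argument is Pardini's Severi inequality $K_{\hat S}^2 \geq 4\chi(\hat S)$: combined with $\chi(\hat S) = 4\chi(S)$ and the positivity $\chi(S) \geq 1$ for surfaces of general type, it forces the equality $\chi(\hat S) = 4$, and in particular $p_g(S) = 0$. The characterization of the equality case (Manetti in the $q=2$ setting, refined by Barja--Pardini--Stoppino) then gives $\deg \hat\alpha = 2$. Moreover, the conjugation action of $G = \Gamma/(\Lambda_1' \oplus \Lambda_2')$ on $\pi_1(\hat S)$ preserves the direct-sum decomposition $\Lambda_1' \oplus \Lambda_2'$, and its compatibility with the Hodge decomposition on $A$ splits $A$ canonically as a product $E_1' \times E_2'$ of elliptic curves carrying the affine $\Gamma$-action of Section~1 (up to isomorphism of the elliptic curves and the choice of half-periods).

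From here the argument of Proposition~\ref{dc} and the final lemma of Section~2 apply verbatim: the Stein factorization $Y$ of $\hat\alpha$ has only rational double points, so $Y$ is a finite double cover of $E_1' \times E_2'$ branched on a $(\ZZ/2\ZZ)^2$-invariant divisor; the numerical constraints combined with the remark at the end of Section~2 (ruling out type $(1,4)$) force this divisor to be of type $(4,4)$, and the $G$-quotient of $Y$ recovers $S$ as a Keum-Naie surface. I expect the main obstacle to be the equality-case step: while Pardini's Severi inequality is by now a standard tool, extracting from its equality case the sharp conclusion $\deg \hat\alpha = 2$ in a form directly usable here requires the more delicate equality-case analysis, and additional care is needed to identify the target of $\hat\alpha$ as a product $E_1' \times E_2'$ compatibly with the full $\Gamma$-structure rather than merely as an abstract abelian surface with a $G$-action.
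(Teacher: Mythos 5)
Your proposal is correct and follows essentially the same route as the paper: pass to the \'etale $(\ZZ/2\ZZ)^2$-cover with Albanese map to a product of elliptic curves, use abelianness of $\pi_1(\hat S)$ for surjectivity, combine Pardini's Severi inequality with $\chi(S)\geq 1$ to force $\chi(\hat S)=4$, invoke Manetti's equality-case theorem on the $K$-ample deformation to get $\deg\hat\alpha = 2$, and finish with the Horikawa-type lemma and $G$-invariance to obtain the $(4,4)$ double-cover structure. The only difference is presentational: you spell out the transfer of the conclusion from the ample deformation $S'$ back to $S$ via the open-and-closed Keum--Naie component of the moduli space, a step the paper compresses into ``since $S$ deforms to a surface with $K_S$ ample, we can apply Manetti's result \dots and we conclude as before''.
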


Before proving the above theorem, we recall the following results:

\begin{theo}[Severi's conjecture, \cite{pardini}]\label{sevconj}
  Let $S$ be a minimal smooth projective surface of maximal Albanese dimension
(i.e., the image of the Albanese map is a surface): then $K_S^2 \geq 
4 \chi(S)$.
\end{theo}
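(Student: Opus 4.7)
The plan is to follow the strategy of Proposition \ref{dc}, but to replace the use of the homotopy-theoretic computation of the degree of the Albanese map of $\hat S$ by Severi's inequality (Theorem \ref{sevconj}) together with its equality case, using the ample-canonical deformation hypothesis as a bridge to the equality-case classification.

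First, I would reconstruct the Albanese tower directly from $\pi_1(S)\cong\Gamma$. The same subgroups $\Gamma_i\leq\Gamma$ and $\Lambda_1'\oplus\Lambda_2'\triangleleft\Gamma$ as in Section 3 yield étale covers $S_i\to S$ and $\hat S\to S$; Lemma \ref{albsi} gives $\Alb(S_i)=E_i$, and the resulting maps $\hat S\to S_i\to E_i$ combine (and lift, using the $\pi_1$-calculation identifying the relevant subgroup of $\Lambda_1\oplus\Lambda_2$ as $\Lambda_1'\oplus\Lambda_2'$) to a morphism $\hat\alpha\colon\hat S\to E_1'\times E_2'$. Since $\pi_1(\hat S)=\Lambda_1'\oplus\Lambda_2'$ is torsion-free of rank $4$, $q(\hat S)=2$, and $\hat\alpha$ induces an isomorphism of fundamental groups, so $\Alb(\hat S)\cong E_1'\times E_2'$ and $\hat\alpha$ is the Albanese map.

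Second, I would apply Severi's inequality. The two composite maps $\hat S\to S_i\to E_i$ are surjective (because $S_i\to E_i$ is, $q(S_i)=1$), so the image of $\hat\alpha$ is not contained in a translate of a proper abelian subvariety; hence $\hat\alpha$ is surjective and $\hat S$ has maximal Albanese dimension. Since $\hat S\to S$ is étale of degree $4$, one has $K_{\hat S}^2=16$ and $\chi(\hat S)=4\chi(S)$, and Theorem \ref{sevconj} yields $\chi(S)\leq 1$. On the other hand $H_1(S,\ZZ)=\Gamma^{\mathrm{ab}}=\ZZ/4\oplus(\ZZ/2)^3$ is finite, so $q(S)=0$ and $\chi(S)=1+p_g(S)\geq 1$. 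Therefore $\chi(S)=1$, $p_g(S)=0$, and we sit at the equality case $K_{\hat S}^2=4\chi(\hat S)$.

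Third, I would extract from this equality that $\deg\hat\alpha=2$, invoking the classification of surfaces attaining equality in Severi's inequality in the regime where the Albanese target has dimension equal to $q$: such a surface admits a generically finite degree-$2$ Albanese map. This replaces the homotopy-invariant intersection-number argument of Proposition \ref{dc}, and it is here that I expect the hypothesis on the existence of a deformation with ample canonical bundle to enter: most naturally, by first deforming $S$ to a minimal model $S_0$ with $K_{S_0}$ ample (so $S_0$ equals its own canonical model), applying the equality-case characterization to $\hat{S_0}$, concluding that $S_0$ is Keum-Naie, and then transporting back to $S$ using the already-established fact (Theorem \ref{homotopy} together with the results of Section 2) that Keum-Naie surfaces form a connected component of the Gieseker moduli space. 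Once $\deg\hat\alpha=2$ is in hand, the Stein factorization $\hat S\to Y\to E_1'\times E_2'$ and the double-cover lemma force $Y$ to have only rational double points and the branch divisor to be of type $(4,4)$ (the $(1,4)$/$(4,1)$ option being excluded by $(\ZZ/2)^2$-invariance, as in the remark following Proposition \ref{dc}); the group $G=\Gamma/(\Lambda_1'\oplus\Lambda_2')\cong(\ZZ/2)^2$ acts on $\hat S$ by deck transformations and descends to the canonical action on $E_1'\times E_2'$, exhibiting $S=\hat S/G$ as a Keum-Naie surface. The main obstacle is the third step: cleanly producing $\deg\hat\alpha=2$ from equality in Severi without any topological identification of $\hat\alpha$, and clarifying the exact role of the ample-deformation hypothesis in activating that classification.
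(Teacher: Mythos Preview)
There is a fundamental mismatch between the statement and your proposal. Theorem~\ref{sevconj} is Severi's inequality itself, a result proved by Pardini in \cite{pardini}; the present paper does not prove it but merely quotes it as an external input. Your proposal makes no attempt whatsoever to establish the inequality $K_S^2\geq 4\chi(S)$ for surfaces of maximal Albanese dimension; you have written a proof for a different theorem.

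What you have actually sketched is a proof of Theorem~\ref{Kample} (the characterization of Keum--Naie surfaces via $K_S^2=4$, $\pi_1(S)\cong\Gamma$, and the ample-deformation hypothesis), which \emph{uses} Theorem~\ref{sevconj} as a tool. Read as a proof of \ref{Kample}, your outline coincides with the paper's argument: construct the \'etale $(\ZZ/2\ZZ)^2$-cover $\hat S$, show the Albanese map is onto, apply Severi's inequality to force $\chi(\hat S)=4$, and then use the equality case to conclude $\deg\hat\alpha=2$. The ``main obstacle'' you flag in your third step---how to extract $\deg\hat\alpha=2$ from equality in Severi and where the ample-deformation hypothesis enters---is resolved in the paper by Manetti's theorem (Theorem~\ref{SevMan}), which characterizes the equality case under the assumption that $K_S$ is ample; this is precisely why hypothesis (iii) is needed, and it is applied directly rather than via the moduli-component detour you suggest. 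The paper's argument for surjectivity of $\hat\alpha$ is also slightly different from yours (it argues that a genus-$2$ curve image would force a surjection from the abelian group $\pi_1(\hat S)$ onto a non-abelian group), but both work.
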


M. Manetti proved Severi's inequality under the stronger assumption that $K_S$ is ample,
but he also gave a description of the limit case $K_S^2 = 4 \chi(S)$, 
which will be crucial for our result.

\begin{theo}[M. Manetti,\cite{manetti}]\label{SevMan}
  Let $S$ be a minimal smooth projective surface of maximal Albanese 
dimension with $K_S$ ample:
  then
$K_S^2 \geq 4 \chi(S)$, and equality holds if and only if $q(S) = 2$, 
and the Albanese map $\alpha : S
\rightarrow \Alb(S)$ is a finite double cover.
\end{theo}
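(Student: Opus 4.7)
The plan is to exploit hypothesis (iii) in order to reduce to a setting where Manetti's sharp Severi inequality (Theorem \ref{SevMan}) applies to a natural étale cover of $S$, and then to transport the resulting double-cover structure back to $S$ itself.

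First I would use the $\pi_1$-data: since $\Gamma^{ab} \cong \ZZ/4\ZZ \oplus (\ZZ/2\ZZ)^3$ is finite, $q(S) = 0$ and $\chi(S) = 1 + p_g(S) \geq 1$. I would then pass to the étale $(\ZZ/2\ZZ)^2$-cover $f\colon \hat{S} \to S$ associated to the normal subgroup $\ZZ^4 \triangleleft \Gamma$, for which $\pi_1(\hat{S}) = \ZZ^4$, and hence $q(\hat{S}) = 2$, $K_{\hat{S}}^2 = 4 K_S^2 = 16$, $\chi(\hat{S}) = 4\chi(S)$. Using the two intermediate double covers $S_i \to S$ whose Albanese varieties are $E_i$ (Lemma \ref{albsi}), I would argue exactly as in the proof of Theorem \ref{homotopy} that the Albanese map of $\hat{S}$ lands in the abelian surface $E_1' \times E_2'$ naturally identified with $\Alb(\hat{S})$ through the $\pi_1$-lattice; in particular $\hat{S}$ has maximal Albanese dimension. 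Pardini's Severi inequality (Theorem \ref{sevconj}) then gives $16 = K_{\hat{S}}^2 \geq 4\chi(\hat{S}) = 16(1 + p_g(S))$, forcing $p_g(S) = 0$, $\chi(\hat{S}) = 4$, and placing us in the Severi equality case $K_{\hat{S}}^2 = 4\chi(\hat{S})$.

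Next, invoking hypothesis (iii), I would choose a flat family $\{S_t\}$ with $S_0 = S$ and $K_{S_{t_0}}$ ample for some $t_0$. The étale cover extends to a family $\hat{S}_t \to S_t$, so $K_{\hat{S}_{t_0}}$ is ample and the numerical identities above are preserved along the family. Applying Manetti's theorem (Theorem \ref{SevMan}) to $\hat{S}_{t_0}$, the Albanese map realises $\hat{S}_{t_0}$ as a finite degree-two cover of its Albanese surface, which is a product of two elliptic curves. The Lemma on double covers of abelian surfaces from the previous section then forces the branch divisor to have only non-essential singularities, and the remark following it pins the associated line bundle $\mathcal{L}$ to type $(2,2)$, since $(1,4)$ is incompatible with $G$-invariance. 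Taking the $G$-quotient exhibits $S_{t_0}$ as a Keum--Naie surface in the sense of Section 1. Finally, $S$ is deformation-equivalent, hence homotopy equivalent, to the Keum--Naie surface $S_{t_0}$, so Theorem \ref{homotopy} (equivalently, the connected-component assertion of Theorem \ref{locmod}) yields that $S$ itself is a Keum--Naie surface.

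The main obstacle I expect to encounter is checking that everything behaves well along the deformation: the étale cover must extend, the Severi equality case must persist at $t_0$ so that Manetti's theorem may be invoked, and the deck-transformation $G$-action on $\Alb(\hat{S}_{t_0}) \cong E_1' \times E_2'$ must match the $\Gamma$-affine action used to define a Keum--Naie surface. Hypothesis (iii) is indispensable precisely because Manetti's description of the Severi equality case requires $K$ ample; without it one cannot conclude that the Albanese of $\hat{S}$ itself is generically finite of degree two, which is the geometric input that identifies $\hat{S}$ with the Keum--Naie construction.
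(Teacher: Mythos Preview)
Your proposal does not address the stated theorem. Theorem~\ref{SevMan} is Manetti's version of the Severi inequality; in the paper it is quoted from \cite{manetti} as an external input and carries no proof whatsoever. What you have written is instead an argument for Theorem~\ref{Kample}: you repeatedly refer to ``hypothesis~(iii)'' (the existence of a deformation with ample canonical bundle), you invoke Theorem~\ref{SevMan} itself as a tool, and you conclude that $S$ is a Keum--Naie surface. None of this is the content of Theorem~\ref{SevMan}.

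If the intended target was Theorem~\ref{Kample}, then your outline is essentially the paper's own argument, with one omission and one elaboration. The omission: you assert that $\hat{S}$ has maximal Albanese dimension without justification, whereas the paper argues this explicitly---if the Albanese image were a curve $C$ of genus~$2$, one would get a surjection from the abelian group $\pi_1(\hat{S})\cong\ZZ^4$ onto the non-abelian group $\pi_1(C)$, a contradiction. The elaboration: you spell out the deformation step (extend the \'etale cover along the family, apply Manetti at $t_0$, then use homotopy equivalence and Theorem~\ref{homotopy} to return to $S$), whereas the paper compresses this to the single sentence ``Since $S$ deforms to a surface with $K_S$ ample, we can apply Manetti's result and obtain that $\hat{\alpha}$ has degree~$2$, and we conclude as before.'' Both readings amount to the same mechanism; your version is more explicit but also introduces the extra bookkeeping you flag in your final paragraph.
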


\begin{proof}{(of \ref{Kample})}
  We know that there is an \'etale $(\ZZ / 2 \ZZ)^2$ - cover $\hat{S}$ 
of $S$ with Albanese
  map $\hat{\alpha} : \hat{S} \rightarrow E_1' \times E_2'$.
The Albanese map of $\hat{S}$ must be surjective, otherwise the 
Albanese image, by the universal
property of the Albanese map, would be a curve $C$ of genus 2.
But then we would have a surjection  $\pi_1 (\hat{S}) \ra \pi_1 (C)$, 
which is a contradiction
since  $\pi_1 (\hat{S})$ is abelian and $\pi_1 (C)$ is not abelian.

Note that $K_{\hat{S}}^2 = 4 K_S^2 = 16$.
By Severi's inequaltiy, it follows that $\chi(\hat{S}) \leq 4$, but 
since $1 \leq \chi(S) = \frac{1}{4}
\chi(\hat{S})$, we have $\chi(\hat{S}) = 4$. Since $S$ deforms to a 
surface with $K_S$ ample, we can
apply Manetti's result and obtain that $\hat{\alpha} : \hat{S} 
\rightarrow E_1' \times E_2'$ has degree
$2$, and we conclude as before.
\end{proof}

  It seems reasonable to conjecture (cf. \cite{manetti}) the 
following, which would obviously
imply our conjecture \ref{conj1}.
\begin{conj}
  Let $S$ be a minimal smooth projective surface of maximal Albanese dimension.
Then $K_S^2 = 4 \chi(S)$ if and only if $q(S) = 2$, and the Albanese 
map has degree $2$.
\end{conj}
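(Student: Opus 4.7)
The plan is to prove the conjecture in both directions, with the forward implication being the substantial open part. The easy direction ($\Leftarrow$) follows from Horikawa's double cover formulas. Assuming $q(S) = 2$ and that the Albanese map $\alpha : S \to A = \Alb(S)$ has degree $2$, the map $\alpha$ exhibits $S$ (or its canonical model) as a double cover of the abelian surface $A$; writing $2\mathcal{L}$ for the branch divisor and applying the formulas used in the last lemma of section 3 with $K_A = 0$, one obtains $K_S^2 - 4\chi(\hol_S) = 2\sum(\xi_i - 1) - t$. Minimality of $S$ and maximal Albanese dimension force $t = 0$ and $\xi_i = 1$ (the branch has only non-essential singularities), whence $K_S^2 = 4\chi(\hol_S)$.

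For the hard direction ($\Rightarrow$), the task is to drop the ampleness hypothesis in Manetti's theorem \ref{SevMan}. My first approach would be to pass to the canonical model $X$ of $S$: since $K_X^2 = K_S^2$ and $\chi(\hol_X) = \chi(\hol_S)$, the canonical model also achieves equality in Severi's inequality and has $K_X$ ample. One would then try to extend Manetti's proof to surfaces with rational double points; his main ingredients (slope inequalities for the Albanese fibration in the irregular case together with Severi-type inequalities) can in principle be adapted on the minimal resolution, but keeping track of the exceptional $(-2)$-curves so that they do not disturb the numerical invariants is delicate.

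The alternative route, parallel to the proof of theorem \ref{Kample}, is deformation-theoretic: show that any such minimal $S$ admits a small deformation with $K_S$ ample, and then apply Manetti's result to the generic fibre, transferring the conclusions $q = 2$ and $\deg \alpha = 2$ back via semicontinuity and properness of the relative Albanese. The main obstacle, in either approach and the reason the statement remains a conjecture, is to control the $(-2)$-curves on $S$: without ampleness of $K_S$, these curves give additional flexibility that could a priori permit counterexamples with $q(S) > 2$ or $\deg \alpha > 2$, and ruling this out appears to require an input genuinely beyond Manetti's present techniques. Any progress on this conjecture would accordingly have to begin with a finer analysis of how $(-2)$-curves interact with the Albanese morphism near the boundary of Severi's inequality.
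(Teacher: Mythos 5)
This statement is posed in the paper as an open conjecture: the authors prove it only with extra hypotheses (Manetti's theorem \ref{SevMan} assumes $K_S$ ample, and theorem \ref{Kample} assumes the existence of a deformation with ample canonical bundle), and they offer no proof of the general case. So there is no paper proof to compare against, and your assessment of the hard direction is accurate: passing to the canonical model does not suffice because Manetti's argument is carried out on a smooth minimal surface with $K_S$ ample, not on a model with rational double points, and the deformation route is exactly what the paper uses in theorem \ref{Kample}, which is why hypothesis (iii) appears there. As a research plan this part is sensible, but it is not a proof, and you correctly do not claim it is.

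There is, however, a genuine error in the direction you call easy. You assert that minimality of $S$ and maximal Albanese dimension force $t=0$ and $\xi_i=1$; they do not. In the lemma of section 3 these equalities are deduced \emph{from} the numerical hypotheses $K_{\hat S}^2=16$, $\chi(\hat S)=4$, i.e. from equality in Severi's inequality, which is precisely what you are trying to prove. With $K_A=0$ Horikawa's formulas give $K_{S^*}^2-4\chi(S^*)=2\sum(\xi_i-1)\ge 0$, so a branch divisor with an essential singularity produces a strict inequality even for minimal $S$. Concretely, take $L$ sufficiently ample on an abelian surface $A$ and $B\in|2L|$ reduced with a single ordinary point of multiplicity $4$: the canonical resolution $S^*$ is already minimal (the curve over the exceptional line is an elliptic curve of self-intersection $-2$, and $A$ contains no rational curves), has $q(S^*)=2$ and Albanese map of degree $2$ onto $A$, yet $K_{S^*}^2=4\chi(S^*)+2$. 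Hence the implication from ``$q=2$ and Albanese map of degree $2$'' to $K_S^2=4\chi(S)$ holds only if one requires the Albanese map to be a \emph{finite} double cover, as in Manetti's formulation \ref{SevMan}; in that case smoothness of $S$ forces the branch curve to be smooth and $K_S^2=4\chi(S)$ follows at once from the double cover formulas, with no appeal to minimality. Your write-up should either add the finiteness hypothesis (flagging that the conjecture as printed is loosely stated) or restrict attention to the forward implication, which is the genuinely open content.
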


\begin{rema}
1) In \cite{keum} the author proves that Bloch's conjecture holds, 
i.e., $A_0(S) = \ZZ$,
for the family of surfaces he constructs. Since Keum constructs only 
a $4$ - dimensional subfamily of the
connected component of the moduli space, this does not  imply that 
Bloch's conjecture holds for all
Keum - Naie surfaces. Nevertheless, exactly the same proof holds in 
the general case, thereby showing
that Bloch's conjecture holds true for all Keum- Naie surfaces.
\end{rema}
\section{The bicanonical map of Keum-Naie surfaces}

\noindent
 It is shown in \cite{naie94} that the bicanonical map of a Keum-Naie surface is base point free and has degree 4. Moreover, in \cite{mlp}, the authors show that the bicanonical image of a Keum - Naie surface is a rational surface, and the bicanonical morphism factors through the double cover $S \ra Y$, where $Y = (E_1' \times E_2') / (\ZZ / 2 \ZZ)^2$ is an $8$-nodal Enriques surface. More precisely they show the following (cf. \cite{mlp}, 5.2.): minimal surfaces $S$ of general type with $p_g = 0$ and $K^2 = 4$ having an involution $\sigma$ such that 
\begin{itemize}
 \item[i)] $S/ \sigma$ is birational to an Eriques surface and
\item[ii)] the bicanonical map is composed with $\sigma$
\end{itemize}
are precisely the Keum - Naie surfaces.

As a corollary of our very explicit description of Keum-Naie surfaces we prove the following

\begin{theo}
The bicanonical map of a Keum-Naie surface is a finite iterated double covering
of the 4 nodal Del Pezzo surface  $\Sigma \subset \PP^4$  of degree 4, the complete intersection of the following two quadric hypersurfaces in $\PP^4$: 
$$
Q_1 = \{ z_0z_3 - z_1z_2 = 0\}, 
$$
$$
Q_2= \{  z_4^2 - z_0z_3= 0\}.
$$
\end{theo}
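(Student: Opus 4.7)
The strategy is to pull the bicanonical system $|2K_S|$ back to the double cover $\pi:\hat X \to E_1'\times E_2'$ via the \'etale cover $\hat S\to S$, making the five sections explicit. First, identify $H^0(S,2K_S) \cong H^0(\hat X,2K_{\hat X})^G$ (under the twisted $G$-action). Using the Horikawa-type decomposition $\pi_*\mathcal{O}_{\hat X}(2K_{\hat X}) = \mathcal{O}(2L) \oplus \mathcal{O}(L)$ on $E_1'\times E_2'$, the $(++)$-invariants split into contributions from each summand: by lemma \ref{44inv}, the $(++)$-isotypic of $H^0(E_1'\times E_2', 2L)$ is $(V_1^{++}\otimes V_2^{++})\oplus(V_1^{--}\otimes V_2^{--})$, of dimension $5$, while the second summand contributes nothing since the $G$-character of the fiber coordinate $w$ is $(+-)$ and, by lemma \ref{h++}, $H^0(E_i', [o_i]+[\frac{e_i}{2}])$ decomposes only into $(++)$ and $(--)$ parts, so the $(+-)$-isotypic of $H^0(E_1'\times E_2', L)$ vanishes. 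This automatically recovers the factorization of the bicanonical morphism as an iterated double cover $S\to X\to Y\to\Sigma$.

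\medskip

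Next, choose bases $s_0, s_1\in V_1^{++}$ and $t_0, t_1\in V_2^{++}$ (identified with $H^0(\PP^1, \mathcal{O}(1))$ via $\pi_i:E_i'\to\PP^1$), together with generators $u\in V_1^{--}$, $v\in V_2^{--}$ of the one-dimensional eigenspaces. Set
\[
z_0 := s_0 t_0,\quad z_1 := s_0 t_1,\quad z_2 := s_1 t_0,\quad z_3 := s_1 t_1,\quad z_4 := uv.
\]
The Segre relation $Q_1 : z_0 z_3 = z_1 z_2$ is then automatic. For $Q_2$, the point is to analyze $u^2$: since $\gamma_2$ acts on $E_1'$ as $-1$ and $\gamma_2^* u = -u$, the section $u$ must vanish at every $\gamma_2$-fixed point, i.e.\ at all four $2$-torsion points; and since $\deg(2[o_1]+2[\frac{e_1}{2}])=4$, the zero divisor of $u$ is exactly this set of four points. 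These project under $\pi_1$ to the two branch points $q_1, q_2\in\PP^1$ arising from $\gamma_2$-ramification, so $u^2$ descends to a section of $\mathcal{O}_{\PP^1}(2)$ vanishing simply at $q_1$ and $q_2$. Choosing $s_0, s_1$ to be the linear forms vanishing at $q_1, q_2$ respectively, we get $u^2 = c_1\, s_0 s_1$ for a nonzero scalar, and symmetrically $v^2 = c_2\, t_0 t_1$. Rescaling $u, v$ yields $z_4^2 = s_0 s_1 t_0 t_1 = z_0 z_3$, giving $Q_2$.

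\medskip

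Consequently the image of the bicanonical map is contained in $Q_1 \cap Q_2$. Parametrizing via the Segre embedding $\PP^1\times\PP^1\subset Q_1$, one sees that $Q_1\cap Q_2$ is the double cover of $\PP^1\times\PP^1$ branched along the four lines $\{s_0=0\}, \{s_1=0\}, \{t_0=0\}, \{t_1=0\}$; this is irreducible of degree $4$ in $\PP^4$ with four nodes at the preimages of the four intersection points of the branch divisor, i.e.\ precisely the claimed $4$-nodal Del Pezzo surface. Since the bicanonical image has degree $(2K_S)^2/\deg\varphi_{|2K_S|} = 16/4 = 4$ (using the degree $4$ cited from \cite{naie94}) and is irreducible, it must equal $Q_1\cap Q_2 = \Sigma$. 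The main obstacle is the coordinate-level identification $u^2 = c_1\, s_0 s_1$, which requires matching the zero divisor of the $(--)$-eigensection with the branch structure of the bidouble cover $\pi_1$; once this bookkeeping is set up, both quadric equations follow by essentially formal manipulation.
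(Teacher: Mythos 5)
Your proposal is correct and takes essentially the same route as the paper: the same $G$-eigenspace decomposition of $H^0(2K_{\hat S})$ via Lemmas \ref{h++} and \ref{44inv} (so that only $(V_1^{++}\otimes V_2^{++})\oplus(V_1^{--}\otimes V_2^{--})$ survives), the same factorization through $E_1'\times E_2'$ and the double cover of $\PP^1\times\PP^1$ branched on two horizontal and two vertical lines, and the same two quadric relations identifying $\Sigma$. The only differences are cosmetic: you derive $z_4^2=z_0z_3$ explicitly from the zero divisors of the $(--)$-eigensections $u,v$ and identify the image by irreducibility (plus the degree‑4 fact quoted from \cite{naie94}), whereas the paper simply normalizes $z_4$ so that $z_4^2=z_0z_3$ and checks directly that $(z_0,\dots,z_4)$ embeds $\Sigma$ as the complete intersection.
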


\begin{proof}
Observe first of all that $H^0(2 K_S) \cong H^0(2 K_{\hat{S}})^{++} $.
The standard formulae for the bicanonical system of a double cover allow to decompose
$H^0(2 K_{\hat{S}}) $ as the direct sum of the invariant part $U$ and the anti-invariant part 
$U'$ for the covering involution of the Albanese map.

We have then $H^0(2 K_{\hat{S}}) = U \oplus U'$, where
$$
U : = \{ \Phi (z_1,z_2) \frac{(dz_1 \wedge dz_2)^{\otimes 2}}{w^2}  \} = \hat{\alpha}^* H^0 (\hol_{E'_1 \times E'_2} (D)),
$$  
and
$$
U' : = \{ \Psi (z_1,z_2) \frac{(dz_1 \wedge dz_2)^{\otimes 2}}{w}  \} =  \hat{\alpha}^*H^0 (\hol_{E'_1 \times E'_2} (L)).
$$  
Here $\Phi (z_1,z_2) =\Phi_1 (z_1)\Phi_2 (z_2) $ is a section of $\mathbb{L}^{\otimes 2} = \hol_{E'_1 \times E'_2}(D)$, whereas $\Psi (z_1,z_2) =\Psi_1 (z_1)\Psi_2 (z_2) $ is a section of $\mathbb{L} = \hol_{E'_1 \times E'_2}(L)$.

Since however $w$ is an eigenvector for $G$ with character of type $(-,+)$, $w^2$ is a $G$-invariant,
and $U^{++} = \hat{\alpha}^* H^0 (\hol_{E'_1 \times E'_2} (D))^{++}$, while
${U'}^{++} = \hat{\alpha}^* H^0 (\hol_{E'_1 \times E'_2} (L))^{-+}$.

By the formulae that we developed in lemma \ref{h++} the second space is equal to 0, while
the formulae developed in section 2 show that  $$U^{++} =  (V_1^{++} \otimes V_2^{++}) \oplus (V_1^{--} \otimes V_2^{--}) \cong \CC^4 \oplus \CC.$$

The first consequence of this calculation is that the composition of $\hat{S} \ra S$ with the bicanonical map of $S$ factors through the product $E'_1 \times E'_2$.

Moreover, these sections are invariant for the action of the group $G$, and further for the action
of the automorphism $$g'(z_1, z_2) : = (- z_1 +\frac{e_1}{2}, z_2)$$ (observe that $G$ and $g'$ are contained in $(\ZZ/2\ZZ)^2 \oplus (\ZZ/2\ZZ)^2$).

Whence the above composition factors through the $(\ZZ/2\ZZ)$ quotient $\Sigma$ of the Enriques surface
$(E'_1 \times E'_2)/G$ by the action of $g'$. 

$\Sigma$ is a double cover of $\PP^1 \times \PP^1$ ramified in the union of two vertical plus two horizontal lines.
The subspace $(V_1^{++} \otimes V_2^{++})$ is the pull back of the hyperplane series of the Segre embedding of
$\PP^1 \times \PP^1$ , thus we get a basis of sections $ z_0,z_1,  z_2, z_3$ 
satisfying $ z_0z_3 - z_1z_2 = 0$. 

 We can complete these to a basis of $H^0(2K_S)$ by
choosing $z_4$ such that $z_4^2 = z_0 z_3$. 

Since $ H^0 (\hol_{E'_1 \times E'_2} (D))^{++}$ is base point free, the bicanonical map is a morphism,
factoring through the double cover $ S \ra Y$ and the double cover $ Y \ra \Sigma$.

It is immediate to see that $(z_0, z_1,  z_2, z_3, z_4)$ yield an embedding of $\Sigma$.
We get a complete intersection of degree 4, hence a Del Pezzo surface of degree 4.
The four nodes, which correspond to the 4 points where the 4 lines of the branch locus meet,
are seen to be the 4 points 
$$ z_4= z_1=z_2= z_3=0,$$
$$ z_4= z_1=z_2=z_0=0,$$
$$ z_4= z_0=z_3= z_1=0,$$
$$ z_4= z_0=z_3= z_2=0.$$

\end{proof}


\bigskip
\noindent
{\bf Authors' Adresses:}

\noindent
I.Bauer, F. Catanese \\
Lehrstuhl Mathematik VIII\\
Mathematisches Institut der Universit\"at Bayreuth\\
NW II\\
Universit\"atsstr. 30\\
95447 Bayreuth

\begin{verbatim}
ingrid.bauer@uni-bayreuth.de, 
fabrizio.catanese@uni-bayreuth.de
\end{verbatim}
\end{document}